\newtheorem{maintheorem}{Theorem}
\newtheorem{theorem}{Theorem}
\newtheorem{proposition}[theorem]{Proposition}
\newtheorem{corollary}[theorem]{Corollary}
\newtheorem{lemma}[theorem]{Lemma}
\newtheorem{remark}[theorem]{Remark}
\def\N{\mathbb{N}}
\newcommand{\bN}{\mathbb{N}}
\newcommand{\ord}{\mbox{\rm ord}}
\newenvironment{cambio rojo}{\color{red}}{}	
\newenvironment{cambio azul}{\color{blue}}{}	
\begin{document}

\title{Conductors of Abhyankar-Moh semigroups of even degrees}

\author{Evelia R. GARC\'IA BARROSO, Juan Ignacio GARC\'IA-GARC\'IA, \\ Luis Jos\'e SANTANA S\'ANCHEZ and Alberto VIGNERON-TENORIO}
\date{}

\maketitle

\begin{abstract} 
In their paper on the embeddings of the line in the plane, Abhyankar and Moh proved an important inequality, now known as the Abhyankar-Moh inequality, which can be stated in terms of the semigroup associated with the branch at infinity of a plane algebraic curve. Barrolleta, Garc\'{\i}a Barroso and P\l oski studied the semigroups of integers satisfying the Abhyankar–Moh inequality and call them Abhyankar-Moh semigroups. They described such semigroups with the maximum conductor. In this paper we prove that all possible conductor values are achieved for the Abhyankar-Moh semigroups of even degree. Our proof is constructive, explicitly describing families that achieve a given value as its conductor.

{\small
\noindent {\it Keywords:} Abhyankar-Moh inequality, Abhyankar-Moh semigroups, conductor, gluing of semigroups.

\noindent 2020 {\it Mathematics Subject Classification:} 20M14, 14H20.
}

\end{abstract}

\section{Introduction}

Suzuki on the one hand  (see \cite{Suzuki}) and Abhyankar and Moh on the other (see \cite{Abhyankar-Moh}) proved independently  that the affine line can be embedded in a unique way, up to ambient automorphisms, in the affine plane.  Let us indicate some details of this fact. Let $K$ be an algebraically closed field of arbitrary characteristic. A polynomial mapping $\sigma_{p,q}:K\longrightarrow K^2$ defined as $\sigma_{p,q}(x,y)=(p(x,y),q(x,y))$ is a {\it polynomial embedding of the line} $K$ if there is a polynomial map $g:K^2\longrightarrow K$ such that $g(p(t), q(t)) = t$ in $K[t]$. This is equivalent to the equality $K[p(t), q(t)] =K[t]$.\\ An affine curve $C\subset K^2$ is an 
{\it embedded line} if there exists a polynomial embedding $\sigma_{p,q}$  such that
$\sigma_{p,q}(K)=C$.  It is easy
to check that any embedded line is an irreducible affine curve. \\Let $C$ be
an embedded line with a minimal equation $f(x,y)=0$. After \cite{Abhyankar-Moh}, the curve  $C$ has only {\it one place at infinity}, that is the closure $\overline C$ of $C$ in the proyective plane has only one point $O_{\infty}$ on the line at infinity and it is unibranch at $O_{\infty}$ (the polynomial $f(x,y)$ is irreducible as an element of the formal power series ring $K[[x,y]]$). In this case, associated with  $\overline C$  we have a numerical semigroup $S(\overline C)$ consisting of zero and all intersection numbers of $\overline C$ with all algebroid curves not having $\overline C$ as an irreducible component. By the Bresinsky–Angerm\" uller Theorem (\cite{Bresinsky} for zero characteristic and \cite{Angermuller} for arbitrary characteristic) there exists a (unique) 
sequence $(v_0, \ldots,v_h)$, called the {\it characteristic  at infinity} of $C$, generating $S(\overline C)$ where $v_0$ is the degree of $C$.\\ Assume that $C$ is an affine irreducible curve of degree greater than 1 with one branch
at infinity and let $(v_0, \ldots,v_h)$ be its characteristic at infinity. Suppose that $\gcd(\deg C, \ord_{O_{\infty}} \overline C)\not\equiv 0$ (mod char $K$). Then, after \cite{Abhyankar-Moh}
\begin{equation}\label{InAM}
    \gcd (v_0,\ldots,v_{h-1})v_h<v_0^2.
\end{equation}
The condition $\gcd(\deg C, \ord_{O_{\infty}} \bar C)\not\equiv 0$ (mod char $K$) is automatically satisfied when the characteristic of $K$ is zero but otherwise it is essential.\\
The inequality \eqref{InAM} is called {\it the Abhyankar-Moh inequality}. Originally this inequality appears linked to the Puiseux expansion of the given branch at the infinite place (see \cite[equality (35)]{Abhyankar-Moh}). The semigroups of integers associated with branches and
satisfying the Abhyankar–Moh inequality are called {\it Abhyankar-Moh semigroups} of degree $v_0$ (the order of the branch). The Abhyankar-Moh semigroups were studied in \cite{Ba-GB-P}, where these semigroups
with maximum possible conductor, which is equal to $(v_0-2)(v_0-1)$, were described. Later, in \cite{GB-Gw-P}, a geometric interpretation of the branches with Abhyankar-Moh semigroup and maximum possible conductor was given. It is well known that the conductor of an Abhyankar-Moh semigroup of degree $v_0$ is an even integer  belonging to the interval $[v_0-1,(v_0-1)(v_0-2)]$. Our main result is:
\begin{maintheorem}\label{main}
Let $n>2$ be an even natural number. For any even number $c$ with $n-1\leq c\leq (n-1)(n-2)$, there is an Abhyankar-Moh semigroup of degree $n$ and conductor equals $c$.
\end{maintheorem}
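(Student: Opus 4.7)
The plan is to proceed by explicit construction, exhibiting for each admissible even $c$ a valid Abhyankar--Moh characteristic of degree $n$ with conductor $c$. I would use the characterisation of AM semigroups from \cite{Ba-GB-P} together with the standard conductor formula
\[
c(S) \;=\; \sum_{i=1}^h (n_i - 1)\, v_i \;-\; v_0 \;+\; 1, \qquad n_i = \frac{e_{i-1}}{e_i}, \ \ e_i = \gcd(v_0,\dots,v_i),
\]
valid for any free numerical semigroup. The upper endpoint $c = (n-1)(n-2)$ is realised at once by the length-one characteristic $(n,\,n-1)$, giving $S = \langle n-1,\, n\rangle$; this is the maximum identified in \cite{Ba-GB-P}.

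For all remaining conductors I would employ length-two characteristics
\[
(n,\, 2k,\, v_2), \qquad 1 \le k \le \tfrac{n-2}{2},\ \gcd\!\bigl(k, \tfrac{n}{2}\bigr)=1,\ v_2 \text{ odd.}
\]
Since $n$ is even, $e_1 = \gcd(n, 2k) = 2$ and $e_2 = 1$, so $n_1 = n/2$, $n_2 = 2$, and the conductor formula collapses to
\[
c \;=\; (n-2)\,k \;+\; v_2 \;-\; (n-1).
\]
For a given target $c$ this linear relation determines $v_2$ from $k$; the parity of $v_2$ is automatic since $c$ and $n-2$ are even while $n-1$ is odd. The validity conditions then reduce to $v_2 \ge 3$, the Abhyankar--Moh inequality $2v_2 < n^2$, and $v_2 \in \langle n/2,\, k\rangle$ (which is the free-semigroup requirement $2v_2 \in \langle n, 2k\rangle = 2\,\langle n/2, k\rangle$); the requirement $v_2 \notin \langle n, 2k\rangle$ is automatic since $v_2$ is odd while every element of $\langle n, 2k\rangle$ is even.

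The strategy for covering the entire range is that each admissible $k$ contributes a slice of achievable conductors, and these slices overlap as $k$ varies. Taking $k=1$ makes $\langle n/2, 1\rangle = \mathbb{N}$, so the free-semigroup condition is vacuous and every odd $v_2$ with $3 \le v_2 < n^2/2$ produces a valid characteristic; this subfamily alone handles every even $c \in \{2, 4, \dots, n^2/2 - 2\}$ without gaps. The remaining values up to $(n-1)(n-2)$ are reached by progressively increasing $k$, ending at $k = (n-2)/2$ (so $v_1 = n-2$), which attains $c = (n-1)(n-2)$ at $v_2 = n^2/2 - 1$.

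The main obstacle will be verifying $v_2 \in \langle n/2, k\rangle$ when $k > 1$. Above the conductor $(n/2-1)(k-1)$ of that two-generator semigroup membership is automatic, and for $k$ close to $(n-2)/2$ the value of $v_2$ forced by the Abhyankar--Moh bound is comfortably in this gap-free region. A short combinatorial/gcd argument, using that the width of the admissible range of $k$'s for each target $c$ is large enough to contain an integer coprime to $n/2$ placing $v_2$ in the gap-free zone, then guarantees that for every even $c \in [n-1, (n-1)(n-2)]$ a valid pair $(k, v_2)$ exists; assembling these cases yields the theorem.
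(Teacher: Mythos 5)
There is a genuine gap: your construction uses only the length-one characteristic $(n,n-1)$ and the length-two characteristics $(n,2k,v_2)$, and this family provably does not cover all even conductors when $4\mid n$. Take $n=8$. The conductor of $S(8,2k,v_2)$ is $6k+v_2-7$, and the admissibility conditions are $k$ odd (so that $\gcd(8,2k)=2$), $v_2$ odd, the Abhyankar--Moh bound $2v_2<64$, and --- this is a condition you dropped --- property (CS2), namely $e_0v_1<e_1v_2$, i.e.\ $v_2>8k$. For $k=1$ this yields conductors $\{8,10,\dots,30\}$ and for $k=3$ it yields $\{36,38,40,42\}$; together with $(8,v_1)$ (conductors $14,28,42$) nothing reaches $32$ or $34$. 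Those two values are attained only by the length-three sequences $(8,4,10,25)$ and $(8,4,10,27)$ (see Table \ref{tabla_AM_grado_8}). The structural reason is that when $n/2$ is even every admissible $k$ is odd, so consecutive admissible values differ by at least $2$, and the interval of conductors produced by $k$ ends at $kn-2(k+\tfrac n2)+\tfrac{n^2}{2}$ while the one produced by $k+2$ begins at $2(k+2)n-2(k+2+\tfrac n2)+2$; near the top of the range these fail to overlap. Your proposed ``short combinatorial/gcd argument'' cannot exist in the generality you need: it is exactly Lemma \ref{lema:aritmetico} of the paper, whose proof ($k=r-2^m$ is coprime to $r$) uses essentially that $r=n/2$ is \emph{odd}. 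This is why the paper proves the interval-covering statement only for $n=2r$ with $r$ odd (Proposition \ref{n=2r}) and then handles $n=2^kr$ with $k\ge 2$ by an entirely different mechanism: induction on $k$ together with gluings $S'\oplus_{2,f}\N$ of A-M semigroups of degree $n/2$ (Proposition \ref{n par}), plus a separate analysis for $n=2^k$ (Proposition \ref{n=p^k}). Some device of this kind, producing characteristic sequences of length $\ge 3$, is unavoidable.

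Two smaller points. First, you replace (CS2) by the condition $v_2\in\langle n/2,k\rangle$ (a complete-intersection/freeness requirement). Under the paper's definitions an A-M semigroup is generated by a \emph{characteristic sequence}, which requires $n_1v_1<v_2$, i.e.\ $v_2>nk$, and does not require $n_2v_2\in\langle v_0,v_1\rangle$; using the wrong condition both admits invalid sequences (e.g.\ $(8,6,7)$, for which your conductor formula gives a wrong answer) and obscures the true lower endpoint $2kn-2k-n+2$ of each slice of conductors. Second, and for the same reason, the claim that $k=1$ alone yields every even $c\in\{2,4,\dots,n^2/2-2\}$ is false: (CS2) forces $v_2\ge n+1$, so that family starts at $c=n$ (which happens to suffice for the theorem, but the stated range is wrong).
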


In order to prove  Theorem \ref{main} we start by computing in Section \ref{computing}, in an algorithmically way, the set of all Abhyankar-Moh semigroups of a fixed degree (see Algorithm 2). Section \ref{proof} is devoted to the proof of Theorem A. This proof is constructive and the algorithms of Section  \ref{computing} play a fundamental role in it.

From the computational point of view, it remains, as an open question, to determine the values of conductors reached by the Abhyankar-Moh semigroups of odd degree, but it seems that this poses new computational challenges. From the geometric point of view, the next step would be to geometrically characterize the branches with Abhyankar-Moh  semigroups of even degree, studied in this paper, following the line given in \cite{GB-Gw-P} for those with the maximum possible conductor.

\section{Preliminaries}
A numerical semigroup $S$ is an additive submonoid of $\N$ with finite complement in $\N$. It is well known that numerical semigroups are finitely generated, and their minimal generating sets are unique. The largest integer in $\N\setminus S$ is called the {\it Frobenius number} of $S$. The {\it conductor} of $S$ is the  Frobenius number of $S$ plus 1.

Given a finite set $A=\{a_1,\ldots ,a_t\}\subset \N$, $A$ is the generating set of $S$ when $S=\N a_1 +\dots +\N a_t$. In this work, when $A$ is the minimal generating set of $S$, we assume that $a_1<\cdots <a_t$.  Besides, $S=\langle A\rangle$ means that $A$ is the minimal set of generators of $S$. The cardinality of the minimal generating set of $S$ is called the {\it embedding dimension} of $S$. 

A sequence of positive integers $(v_0,\ldots,v_h)$ is called a
{\em characteristic sequence} if it satisfies the following two
properties:
\begin{enumerate}
\item[\hypertarget{cs1}{\rm{(CS1)}}] Put $e_k=\gcd (v_0,\ldots,v_k)$ for $0\leq k \leq  h$. Then $e_k<e_{k-1}$ for $1\leq k \leq  h$
and $e_h=1$.
\item[\hypertarget{cs2}{\rm{(CS2)}}] $e_{k-1}v_k<e_kv_{k+1}$ for $1\leq k \leq  h-1$.
\end{enumerate}

\noindent We put  $n_k= \frac{e_{k-1}}{e_k}$ for $1\leq k\leq h$. Therefore,
$n_k>1$ for $1\leq k \leq  h$ and $n_{h}=e_{h-1}$. If $h=0$, the only
characteristic sequence is $(v_0)=(1)$. If $h=1$, the sequence
$(v_0,v_1)$ is a characteristic sequence if and only if  $\gcd
(v_0,v_1)=1$. Property \hyperlink{cs2}{\rm{(CS2)}} plays a role  if and only if
$h\geq 2$.

\begin{lemma}(\cite[Lemma 1.1]{Ba-GB-P})
\label{llll}
Let $(v_0,\ldots,v_h)$ be a characteristic sequence with $h\geq 2$. Then,
\begin{enumerate}
\item[\hbox{\rm (i)}] $v_1<\cdots<v_h$ and $v_0<v_2$.
\item[\hbox{\rm (ii)}] Let $v_1<v_0$. If $v_0\not\equiv 0$ \hbox{\rm (mod $v_1$) } then $(v_1,v_0,v_2,\ldots,v_h)$ is a cha\-rac\-teristic sequence. If  $v_0\equiv 0$ \hbox{\rm (mod $v_1$) } then $(v_1,v_2,\ldots,v_h)$ is a cha\-rac\-teristic sequence.
\end{enumerate}
\end{lemma}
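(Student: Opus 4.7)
The plan is to verify parts (i) and (ii) by direct appeal to the defining conditions \hyperlink{cs1}{(CS1)} and \hyperlink{cs2}{(CS2)}, unwinding the partial gcds $e_k$ carefully.

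For part (i), the chain $v_1<v_2<\cdots<v_h$ follows by dividing \hyperlink{cs2}{(CS2)} by $e_k$: since $e_k\mid e_{k-1}$, condition \hyperlink{cs2}{(CS2)} rewrites as $v_{k+1}>n_k v_k$, and since $n_k\geq 2$ (integer, greater than $1$) we obtain $v_{k+1}>v_k$ for every $1\leq k\leq h-1$. For $v_0<v_2$ I would apply \hyperlink{cs2}{(CS2)} at $k=1$, which reads $v_0v_1=e_0v_1<e_1v_2$. Since $e_1=\gcd(v_0,v_1)$ divides $v_1$ we have $v_1\geq e_1$, hence $v_0\, e_1\leq v_0v_1<e_1v_2$ and therefore $v_0<v_2$.

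For part (ii), the strategy in each case is to write down the candidate sequence explicitly and compare its partial gcds with the original $e_k$'s. In the case $v_0\not\equiv 0\pmod{v_1}$, for the swapped sequence $(v_1,v_0,v_2,\ldots,v_h)$ the new partial gcds $e'_k$ satisfy $e'_0=v_1$, $e'_1=\gcd(v_1,v_0)=e_1$, and $e'_k=e_k$ for $k\geq 2$. The strict inequality $e'_1<e'_0$ is exactly the hypothesis $v_1\nmid v_0$; the remaining \hyperlink{cs1}{(CS1)} strict decreases and $e'_h=1$ are inherited from the original sequence. Condition \hyperlink{cs2}{(CS2)} at $k=1$ becomes $v_1 v_0<e_1 v_2$, which is precisely the original \hyperlink{cs2}{(CS2)} at $k=1$ (rewritten), and for $k\geq 2$ it coincides with the original verbatim.

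In the case $v_0\equiv 0\pmod{v_1}$, the divisibility $v_1\mid v_0$ gives $e_1=v_1$ and, more generally, $\gcd(v_1,v_2,\ldots,v_k)=\gcd(v_0,v_1,\ldots,v_k)=e_k$ for every $k\geq 1$. Writing the shortened sequence as $(w_0,\ldots,w_{h-1})$ with $w_i=v_{i+1}$, its partial gcds are therefore $e_{k+1}$, so both \hyperlink{cs1}{(CS1)} and \hyperlink{cs2}{(CS2)} for the new sequence reduce to shifted instances of the corresponding conditions for the original. The main (minor) obstacle is keeping the indexing bookkeeping correct, especially in the edge case $h=2$, where the shortened sequence is just $(v_1,v_2)$ and only $\gcd(v_1,v_2)=e_2=1$ must be checked; this is immediate from $v_1\mid v_0$ once we reuse $\gcd(v_1,v_2)=\gcd(v_0,v_1,v_2)$.
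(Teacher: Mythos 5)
Your verification is correct. Note that the paper itself gives no proof of this lemma --- it is imported verbatim from \cite[Lemma 1.1]{Ba-GB-P} --- so there is no in-paper argument to compare against; your direct unwinding of \hyperlink{cs1}{(CS1)} and \hyperlink{cs2}{(CS2)} is a sound, self-contained substitute. All the key steps check out: $v_{k+1}>n_kv_k\geq 2v_k$ gives the chain in (i); $v_1\geq e_1$ turns $e_0v_1<e_1v_2$ into $v_0<v_2$; and in (ii) the identities $e'_k=e_k$ (swap case) and $e''_j=e_{j+1}$ (deletion case, using $v_1\mid v_0$) reduce everything to the original conditions. The only point you pass over lightly is that in the deletion case the new sequence also needs its leading gcd drop $e''_1<e''_0$, i.e.\ $v_1>1$ when $h=2$; this is immediate since $v_1=e_1>e_2=1$, so no gap results.
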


A semigroup $S\subseteq \bN$  is {\em strongly increasing} (SI-semigroup) if $S\neq \{0\}$ and it is generated by a characteristic sequence, that is, $S=\N v_0+\cdots +\N v_h$. We will denote  by $S=S(v_0,v_1,\ldots,v_h)$ the numerical semigroup generated by the characteristic sequence $(v_0,v_1,\ldots,v_h)$.

A SI-semigroup $S(v_0,v_1,\ldots,v_h)\subseteq \N$ is  an {\em Abhyankar-Moh semigroup (A-M semigroup)} of degree $n=v_0 >1$ if it satisfies the {\em Abhyankar-Moh inequality}
\[ e_{h-1}v_h< n^2.\]
Observe that the semigroup $\mathbb N=S(n,1)$ for any $n\in \mathbb N$, so $\mathbb N$ is an Abhyankar-Moh semigroup of any degree.

The conductor of the A-M semigroup $S=S(v_0,v_1,\ldots,v_h)$ is 
\begin{equation}
\label{conductor}
c(S)=\sum_{i=1}^h(n_i-1)v_i-v_0+1,
\end{equation}
where $n_i=\frac{e_{i-1}}{e_i}$ for $i\in \{1,\ldots,h\}$. Moreover $c(S)$ is an even integer (see \cite[Proposition 1.2]{Ba-GB-P}).

\begin{remark}\label{v0>v1}
If $S=S(v_0,v_1,\ldots,v_h)$ is an A-M
semigroup of degree $n=v_0>1$  then $v_0>v_1$, since
$e_0v_1<e_1v_2<\cdots<e_{h-1}v_h<n^2=v_0^2$. 
\end{remark}

Let $S=S(v_0,v_1,\ldots,v_h)$ be an A-M semigroup of degree $n=v_0>1$. If $c(S)$ is the conductor of $S$ then, by \cite[Theorem 2.2]{Ba-GB-P},
\begin{equation}\label{cota1}
c(S)\leq (n-1)(n-2).
\end{equation}
Moreover,
\begin{equation}\label{Th:maximal}
c(S)=(n-1)(n-2) \text{ if and only if }v_k=\frac{n^2}{e_{k-1}}-e_k \text{ for }1\leq k \leq h.
\end{equation}
Hence, if $S\neq \mathbb N$ is an  A-M semigroup
of degree $n>1$ its conductor $c(S)$ is an even integer
number verifying the inequalities
\begin{equation}
\label{ineq}
n-1\leq c(S)\leq (n-1)(n-2).
\end{equation}
By Remark \ref{v0>v1} we get that the only A-M
semigroup of degree $2$ is generated by the characteristic sequence
$(2,1)$. Such semigroup  achieves the upper bound for the conductor,
given in \eqref{cota1}.

After \cite[Proposition 1.2]{Ba-GB-P}, if $(v_0,v_1,\ldots ,v_h)$ is a characteristic sequence and $S=S(v_0,v_1,\ldots ,v_h)$, then, $\{\min(v_0,v_1),v_2,\ldots ,v_{h-1},v_h\}$ is a subset of the minimal generating set of $S$, and $v_2\notin \mathbb N v_0+ \mathbb N v_1$. Furthermore, for every $w\in S$, $(v_0,v_1,\ldots ,v_i,w,v_{i+1},\ldots ,v_h)$ is not a characteristic sequence, for every $i=0,\ldots ,h-1$.

Let $S=S(v_0,v_1,\ldots ,v_h)$ be a SI-semigroup. By Lemma \ref{llll}, the characteristic sequences generating $S$ are:
\begin{itemize}
    \item $(v_0,v_1,v_2,\ldots ,v_h)$.
    \item If $v_0<v_1$, $(v_1,v_0,v_2,\ldots ,v_h)$.
    \item If $v_1<v_0$ and $v_0\not\equiv 0$ \hbox{\rm (mod $v_1$)},  $(v_1,v_0,v_2,\ldots,v_h)$.
    \item If $v_1<v_0$ and $v_0\equiv 0$ \hbox{\rm (mod $v_1$)}, $(v_1,v_2,\ldots,v_h)$, and $(v_2,v_1,\ldots,v_h)$.
    \item $(kv'_0,v'_0,v'_1,v_2,\ldots ,v_h)$ for every integer $k\in [2, v'_1/v'_0  ]$ where $v'_0=\min(v_0,v_1)$, $v_1'=\max(v_0,v_1)$ and $v'_1\not\equiv 0$ \hbox{\rm (mod $v'_0$)}.
\end{itemize}
As a consequence, we determine the characteristic sequences generating a SI-semigroup given by its minimal generating set.

Let $S=\langle a_1,\ldots ,a_t\rangle$ be a SI-semigroup. Then, from \cite[Corollary 1.4]{Ba-GB-P},  the characteristic sequences generating $S$ are:
\begin{equation}
\begin{array}{l}
\bullet\, (a_1,\ldots ,a_t),\\
\bullet\, (a_2,a_1,a_3,\ldots ,a_t),\\
\bullet  \text{ and }(ka_1,a_1,a_2,\ldots ,a_t)\text{ for every integer }k\in [2, a_2/a_1 ).
\end{array}
\end{equation}

Similarly, the possible characteristic sequences generating an A-M semigroup given by its minimal generating set can be described: let $S=\langle a_1,\ldots ,a_t\rangle$ be a SI-semigroup. If $S$ is an A-M semigroup, then
\begin{equation}\label{SqAM}
S=S(a_2,a_1,a_3,\ldots ,a_t),\text{ or }S=S(ka_1,a_1,a_2,\ldots ,a_t)
\end{equation}
for every integer $k\in \Big(\frac{\sqrt{\gcd(a_1,\ldots ,a_{t-1})a_t}}{a_1},\frac{a_2}{a_1}\Big)$. For the first case, $S$ should be an A-M semigroup of degree $a_2$, and of degree $ka_1$ for the second (see \cite[Proposition 2.1]{Ba-GB-P}).

Notice  that if you want to check whether the semigroup $\langle a_1,\ldots ,a_t\rangle$ is an A-M semigroup, you only need to check whether it is an A-M semigroup of degree $a_2$.

\section{Computing Abhyankar-Moh semigroups}
\label{computing}
The A-M semigroups minimally generated by two elements are easy to describe. Whenever $b>a>1$ are two coprime integers, $\langle b,a\rangle$ is an A-M semigroup of degree $b$. Furthermore, $\langle ka,b\rangle$ is also an A-M semigroup of degree $ka$ for every  $k\in (\sqrt{b/a},b/a )\cap \N$, with $k$ coprime with $b$.

A-M semigroups of higher embedding dimensions are completely characterized by {\it gluings}: the gluing of $S=\langle a_1,\ldots ,a_t \rangle $ and $\mathbb N$ with respect to the positive integers $d$ and $f$ with $\gcd(d,f)=1$ (see \cite[Chapter 8]{semigrupos}) is the numerical semigroup $\N da_1 +\dots +\N da_t +\N f$. We denote it by $S \oplus_{d,f}\N$. 

\begin{proposition}\label{proposition_construction_A-M}
The set $\bar S$ is an A-M semigroup with embedding dimension $t\ge 3$ if and only if $\bar S= S\oplus_{d,f}\N$ where $S=\langle b_1,\ldots ,b_{t-1}\rangle$ is an A-M semigroup of degree $m$, and $f,d$ are two coprime integers such that $dm^2>f>d\gcd(b_1,\ldots, b_{t-2})b_{t-1}$. Moreover, the degree of $\bar S$ is $dm$.
\end{proposition}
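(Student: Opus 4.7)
My plan is to verify each direction by exhibiting the relevant characteristic sequence and checking \hyperlink{cs1}{(CS1)}, \hyperlink{cs2}{(CS2)}, and the Abhyankar-Moh inequality term by term. Both directions reduce, under division or multiplication by the common factor $d$, to the corresponding conditions for $S$.

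For the forward implication I would start from a characteristic sequence $(v_0,v_1,\ldots,v_h)$ generating $\bar S$. Since the minimal generating set of $\bar S$ is contained in $\{v_0,v_1,\ldots,v_h\}$ and has size $t\ge 3$, the hypothesis forces $h\ge 2$. I then set $d:=e_{h-1}$, $m:=v_0/d$, $f:=v_h$, and $S:=\langle v_0/d,\ldots,v_{h-1}/d\rangle$. Dividing $(v_0,\ldots,v_{h-1})$ entrywise by $d$ scales every $e_k$ by $1/d$, so the quotient sequence is a characteristic sequence of $S$ of degree $m$; its A-M inequality, after clearing $d^2$, reads $e_{h-2}v_{h-1}<v_0^2$ and follows by chaining \hyperlink{cs2}{(CS2)} at $k=h-1$ with A-M for $\bar S$. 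The coprimality $\gcd(d,f)=e_h=1$ is immediate, $dm^2>f$ is A-M for $\bar S$, and $f>d\gcd(b_1,\ldots,b_{t-2})b_{t-1}$ becomes \hyperlink{cs2}{(CS2)} at $k=h-1$ once I identify $b_{t-1}=v_{h-1}/d$ and $\gcd(b_1,\ldots,b_{t-2})=e_{h-2}/d$ via Lemma \ref{llll}(i) together with the description of characteristic sequences generating a SI-semigroup.

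For the backward implication I would take the characteristic sequence $(w_0,\ldots,w_s)$ of $S$ (with $w_0=m$) and verify that $(dw_0,\ldots,dw_s,f)$ is a characteristic sequence for $\bar S$ satisfying the A-M inequality for degree $dm$. Its partial gcds satisfy $\bar e_k=d\,e^S_k$ for $k\le s$ and $\bar e_{s+1}=\gcd(d,f)=1$, so \hyperlink{cs1}{(CS1)} holds (with $d\ge 2$, as is tacit in the gluing set-up). Condition \hyperlink{cs2}{(CS2)} at $k\le s-1$ is $d^2$ times the one for $S$; at $k=s$ it reads $d\,e^S_{s-1}w_s<f$, which is exactly the assumed lower bound on $f$. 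The A-M inequality $\bar e_s f<(dm)^2$ collapses to $f<dm^2$. Hence $\bar S$ is A-M of degree $dm$. A short separate argument shows that its embedding dimension is exactly $t$: $f\notin dS$ because $\gcd(d,f)=1$, and any alternative expression $db_i=\sum_{j\ne i}\alpha_j db_j+\beta f$ forces $d\mid\beta$ and then $b_i\ge f>b_{t-1}\ge b_i$, a contradiction.

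I expect the only mildly delicate point to be the identification of $b_{t-1}$ and $\gcd(b_1,\ldots,b_{t-2})$ with quantities of the characteristic sequence of $S$ in the two cases $w_0\equiv 0 \pmod{w_1}$ and $w_0\not\equiv 0 \pmod{w_1}$, which determine whether $w_0=m$ appears in the minimal generating set. In both cases Lemma \ref{llll}(i) shows $w_s$ is the maximum of the sequence, so $b_{t-1}=w_s$; in the first case $w_0$ is a multiple of $w_1$ and so does not affect the gcd, while in the second $w_0$ itself lies in $\{b_1,\ldots,b_{t-2}\}$. Either way $\gcd(b_1,\ldots,b_{t-2})=e_{s-1}^S$, and the rest of the argument is direct verification.
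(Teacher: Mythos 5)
Your proof is correct and follows essentially the same route as the paper: both decompose $\bar S$ as $S\oplus_{d,f}\N$ with $d$ the gcd of all minimal generators except the largest and $f$ the largest one, and translate the strongly-increasing condition and the Abhyankar--Moh inequality for $\bar S$ into the stated bounds on $f$ together with the A-M inequality for $S$ of degree $m=v_0/d$. The only difference is cosmetic: you verify (CS1), (CS2) and the minimality of the generating set of the gluing directly from the axioms, whereas the paper invokes the list \eqref{SqAM} of admissible characteristic sequences and cites \cite[Theorem 3]{BG-GG-VT} for the SI-property of the gluing.
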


\begin{proof}

Consider $\bar S= \langle a_1,\ldots ,a_t\rangle$, and put $d= \gcd (a_1,\ldots ,a_{t-1})$, $f=a_t$, $b_i=a_i/d$ for all $i=1,\ldots ,t-1$, and the SI-semigroup $S=\langle b_1,\ldots ,b_{t-1}\rangle$. Trivially, $\bar S= S\oplus_{d,f}\N$. Since $\bar S$ is a SI-semigroup, 
\begin{eqnarray*}f&=&a_t > \gcd(a_1,\ldots ,a_{t-2})a_{t-1}/d=d\gcd(a_1/d,\ldots ,a_{t-2}/d)a_{t-1}/d\\
&=& d\gcd(b_1,\ldots, b_{t-2})b_{t-1}.
\end{eqnarray*}
By \eqref{SqAM}, the degree of $\bar S$ equals $a_2$, or to $ka_1$ for some integers $k>1$. If we assume that the degree is $a_2$, then $(a_2,a_1,a_3,\ldots ,a_t)$ is a characteristic sequence generating $\bar S$, and 
$a_2^2>\gcd(a_1,\ldots ,a_{t-1})a_t$. Thus, $(b_2,b_1,b_3,\ldots ,b_{t-1})$ is a characteristic sequence generating $S$, and 
\begin{eqnarray*}
b_2^2 &=& (a_2/d)^2>\gcd(a_1,\ldots ,a_{t-1})a_t/d^2> \gcd(a_1,\ldots ,a_{t-2})a_{t-1}/d^2 \\&=& \gcd(b_1,\ldots ,b_{t-2}) b_{t-1}.
\end{eqnarray*}
Hence, $S$ is an A-M semigroup of degree $m=b_2$.

Similarly, for the degree $ka_1$, $(kb_1,b_1,b_2,\ldots ,b_{t-1})$ is a characteristic sequence generating $S$, and $(kb_1)^2=(ka_1/d)^2> \gcd(b_1,\ldots ,b_{t-2}) b_{t-1}$, so $S$ is an A-M semigroup of degree $m=kb_1$. For the both previous possibilities, the inequality $dm^2>f$ is satisfied.

Conversely, let $S=\langle b_1,\ldots ,b_{t-1}\rangle$ be an A-M semigroup of degree $m\in \N$, and $f,d$ be two coprime integers such that $dm^2>f>d\gcd(b_1,\ldots, b_{t-2})b_{t-1}$.

Since $f>d\gcd(b_1,\ldots, b_{t-2})b_{t-1}$, $\bar S=S\oplus_{d,f}\N=\langle db_1 ,\ldots , db_{t-1},f \rangle $ is a SI-semigroup (\cite[Theorem 3]{BG-GG-VT}). Again, by \eqref{SqAM}, the possible characteristic sequences generating $S$ are $(b_2,b_1,b_3,\ldots ,b_{t-1})$ and $(kb_1,b_1,b_3,\ldots ,b_{t-1})$, with degrees $b_2$ and $kb_1$, respectively. Hence, the characteristic sequences generating $\bar S$ are $(db_2,db_1,db_3,\ldots ,db_t,f)$, and, $(kdb_1,db_1,db_2,\ldots ,db_t,f)$; and by hypothesis, we have the Abhyankar-Moh inequality $(dm)^2>df$. Thus, $\bar S$ is an A-M semigroup of degree $dm$.
\end{proof}

Let $S$ be an A-M semigroup of degree $m$, and $f>d>1$ two coprime integers such that $S\oplus_{d,f}\N$ is also an A-M semigroup of degree $dm$. Then, by \cite[Proposition 10]{delorme},
\begin{equation}\label{CondGSI}
\mathrm c (S\oplus_{d,f}\N)=d\mathrm c(S)+(d-1)(f-1),
\end{equation}
where $\mathrm c(S)$ denotes the conductor of $S$.

Denote by $M(A)$ the largest element of the minimal system of generators of the numerical semigroup $\langle A \rangle$, and $s(A)=\min(A\setminus \{\min(A)\})$, that is, the second element in $A$. Algorithm \ref{algoritmo1} computes all the A-M semigroups with conductor less than or equal to a fixed non-negative integer.
\begin{algorithm}[h]
	\BlankLine
	\KwIn{$c\in\N\setminus\{0,1\}$.}
	\KwOut{The set $\{A \mid \langle A \rangle\textrm{ is an A-M semigroup with } \mathrm c (\langle A \rangle)\leq c \}$. }
	\BlankLine
	${\cal A}\leftarrow\big\{ \{a,b\} \mid 1<a<b,\, \gcd(a,b)=1,\, ab-a-b+1\le c \big\}$\;
	\ForAll {$k\in \{2,\dots,c-1\}$}
	{
		$B\leftarrow\{ A \in {\cal A}\mid \mathrm c(\langle A \rangle)=k\}$ \;
		\ForAll{$A\in B$}
		{
			$G_{A}\leftarrow\big\{(d,f) \in \N^2 \mid \gcd(f,d)=1,\, s(A)^2>f/d> \gcd\big(A\setminus\{M(A)\}\big)\cdot M(A),\,
			c \ge \max\{dk+(d-1)(f-1),d^2(k-1)+1\}\big\}$\;
		}
		${\cal A}\leftarrow{\cal A}\cup \big\{ d A \cup \{f\} \mid (d,f)\in G_A\big\}$\;
	}
	\Return $\mathcal A$ \;
\caption{Computation of the set of A-M semigroups with conductor less than or equal to $c$.}\label{algoritmo1}
\end{algorithm}

Table \ref{tabla_conductor_hasta_18} illustrates Algorithm \ref{algoritmo1}: we collect all the A-M semigroups with conductor less that or equal to $18$. We also give the characteristic sequences associated to the given semigroups.
\begin{table}[h]
\centering
\begin{tabular}{c|l}
A-M semigroups & Characteristic sequences\\\\\hline
$\langle 2,3 \rangle$ & $\{ ( 3,2 ) \}$ \\ \hline
$\langle 2,5 \rangle$ & $\{ ( 4,2,5 ),( 5,2 ) \}$ \\ \hline
$\langle 2,7 \rangle$ & $\{ ( 4,2,7 ),( 6,2,7 ),( 7,2 ) \}$ \\ \hline
$\langle 2,9 \rangle$ & $\{ ( 6,2,9 ),( 8,2,9 ),( 9,2 ) \}$ \\ \hline
$\langle 2,11 \rangle$ & $\{ ( 6,2,11 ),( 8,2,11 ),( 10,2,11 ),( 11,2 ) \}$ \\ \hline
$\langle 2,13 \rangle$ & $\{ ( 6,2,13 ),( 8,2,13 ),( 10,2,13 ),( 12,2,13 ),( 13,2 ) \}$ \\ \hline
$\langle 2,15 \rangle$ & 
$\begin{array}{l}
    \{ ( 6,2,15 ),( 8,2,15 ),( 10,2,15 ),\\
    ( 12,2,15 ),( 14,2,15 ),( 15,2 ) \} 
\end{array}$ \\ \hline
$\langle 2,17 \rangle$ & 
$\begin{array}{l}
    \{ ( 6,2,17 ),( 8,2,17 ),( 10,2,17 ),\\
    ( 12,2,17 ),( 14,2,17 ),( 16,2,17 ),( 17,2 ) \}
\end{array}$
\\ \hline
$\langle 2,19 \rangle$ & 
$\begin{array}{l}
    \{ ( 8,2,19 ),( 10,2,19 ),( 12,2,19 ),\\
    ( 14,2,19 ),( 16,2,19 ),( 18,2,19 ),( 19,2 ) \}
\end{array}$
\\ \hline
$\langle 3,4 \rangle$ & $\{ ( 4,3 ) \}$ \\ \hline
$\langle 3,5 \rangle$ & $\{ ( 5,3 ) \}$ \\ \hline
$\langle 3,7 \rangle$ & $\{ ( 6,3,7 ),( 7,3 ) \}$ \\ \hline
$\langle 3,8 \rangle$ & $\{ ( 6,3,8 ),( 8,3 ) \}$ \\ \hline
$\langle 3,10 \rangle$ & $\{ ( 6,3,10 ),( 9,3,10 ),( 10,3 ) \}$ \\ \hline
$\langle 4,5 \rangle$ & $\{ ( 5,4 ) \}$ \\ \hline
$\langle 4,7 \rangle$ & $\{ ( 7,4 ) \}$ \\ \hline
$\langle 4,6,13 \rangle$ & $\{ ( 6,4,13 ) \}$ \\ \hline
$\langle 4,6,15 \rangle$ & $\{ ( 6,4,15 ) \}$ \\ \hline
$\langle 4,6,17 \rangle$ & $\{ ( 6,4,17 ) \}$ \\ \hline
\end{tabular}
\caption{A-M semigroups up to conductor 18.}\label{tabla_conductor_hasta_18}
\end{table}

Let $n>1$ be an integer. A sequence of integers $(d_0,
\ldots, d_h)$ will be called a {\em sequence of divisors of } $n$ if
$d_i$ divides $d_{i-1}$ for $1\leq i \leq h$ and
 $n=d_0>d_1>\cdots>d_{h-1}>d_h=1$. In particular if $(v_0,v_1,\ldots,v_h)$ is a
characteristic sequence then $(e_0,\ldots,e_h)$ is a sequence of
divisors of $n=v_0$, where $e_i=\gcd(v_0,\ldots ,v_i)$. In this case we will say that $(e_0,\ldots,e_h)$ is the sequence of divisors associated with $(v_0, \ldots, v_h)$.

Using sequences of divisors, in \cite[Proposition 2.3]{Ba-GB-P} it was proved that
\begin{equation}\label{eq:divisors}
  \left(n, n-d_1,
\frac{n^2}{d_{1}}-d_2, \ldots, \frac{n^2}{d_{i-1}}-d_i,  \ldots,
\frac{n^2}{d_{k-1}}-1\right)
\end{equation}
is a characteristic sequence and the semigroup generated by it is an A-M semigroup  of degree $n$ and conductor $(n-1)(n-2)$ (which is the maximal possible conductor after \eqref{cota1}). Inspired by this idea, using sequences of divisors, we  introduce an algorithm for computing all the A-M semigroups for a given degree (Algorithm \ref{algoritmo2}). The following proposition is the key for providing this algorithm.

\begin{proposition}\label{AM-alg2}
Let $n\ge 2$ be an integer and $D=(d_0,\ldots,d_h)$ be a sequence of divisors of $n=d_0$. Then, the characteristic sequence of any A-M semigroup with degree $n$ and sequence of divisors equals $D$ is of the form
$(n,d_1k_1,\ldots,d_hk_h)$, with $1\le k_1\le \frac{d_0}{d_1}-1$, $d_{i-2}k_{i-1}+1\le d_{i}k_{i}\le  \frac{d_0^2}{d_{i-1}}-d_{i}$ for any $i=2,\ldots ,h$, and $\gcd\big(\frac{d_{i-1}}{d_i},k_i\big)=1$ for $i=1,\ldots ,h$.
\end{proposition}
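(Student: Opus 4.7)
The plan is to establish the claimed characterization in both directions, turning each defining property of a characteristic sequence of an A-M semigroup into an arithmetic condition on the coefficients $k_i$.

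For the necessity direction, I would start with a characteristic sequence $(v_0,v_1,\ldots,v_h)$ of an A-M semigroup of degree $n$ whose associated divisor sequence is $D$. Since $d_i=e_i$ divides $v_i$ by construction, I can set $v_i=d_ik_i$ with $k_i\in\mathbb{N}_{\ge 1}$. The recursion $d_i=\gcd(e_{i-1},v_i)=\gcd(d_{i-1},d_ik_i)=d_i\gcd(d_{i-1}/d_i,k_i)$ then reads as the required coprimality condition $\gcd(d_{i-1}/d_i,k_i)=1$. The lower bound $d_ik_i\ge d_{i-2}k_{i-1}+1$ for $i\ge 2$ is exactly property (CS2), namely $e_{i-2}v_{i-1}<e_{i-1}v_i$, divided through by $d_{i-1}$. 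For the upper bound on $k_1$, Remark \ref{v0>v1} gives $v_1<v_0$; since $d_1$ divides both sides, this forces $d_1k_1\le d_0-d_1$, equivalently $k_1\le d_0/d_1-1$.

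The upper bound $d_ik_i\le d_0^2/d_{i-1}-d_i$ for $i\ge 2$ is where the argument demands a little more care, and is the step I would flag as the main subtle point. The idea is to iterate (CS2) from index $i$ upward, obtaining $e_{i-1}v_i<e_iv_{i+1}<\cdots<e_{h-1}v_h$, and then combine with the Abhyankar-Moh inequality $e_{h-1}v_h<n^2$ to deduce $v_i<d_0^2/d_{i-1}$. The key observation is that $d_0^2/d_{i-1}$ is itself a multiple of $d_i$ (because $d_i\mid d_{i-1}\mid d_0$, so $d_i$ divides $d_0\cdot(d_0/d_{i-1})$), and so is $v_i=d_ik_i$; hence the strict inequality sharpens to the claimed $d_ik_i\le d_0^2/d_{i-1}-d_i$.

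For the sufficiency direction, given $k_1,\ldots,k_h$ satisfying all the stated constraints, I would verify in turn that $(n,d_1k_1,\ldots,d_hk_h)$ is a characteristic sequence generating an A-M semigroup of degree $n$ with divisor sequence $D$. An induction on $i$ using $\gcd(d_{i-1}/d_i,k_i)=1$ yields $\gcd(v_0,\ldots,v_i)=d_i$, which simultaneously establishes (CS1) and produces the correct divisor sequence. The lower bounds $d_ik_i\ge d_{i-2}k_{i-1}+1$ translate back into (CS2). Finally the upper bound at $i=h$ gives $e_{h-1}v_h=d_{h-1}d_hk_h\le d_0^2-d_{h-1}d_h<d_0^2=n^2$, which is the Abhyankar-Moh inequality. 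The remaining work is then purely arithmetic verification, with the divisibility observation in the forward direction being the only genuinely non-mechanical ingredient.
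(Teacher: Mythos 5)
Your proof is correct, and for the one genuinely non-trivial step it takes a different route from the paper. Both arguments agree on the easy parts (the coprimality condition, the lower bounds from (CS2), and the bound on $k_1$ from $v_1<v_0$), but they diverge on the intermediate upper bounds $d_ik_i\le \frac{d_0^2}{d_{i-1}}-d_i$ for $2\le i\le h-1$. The paper obtains these by invoking Proposition \ref{proposition_construction_A-M} to peel off the last generator repeatedly, writing $S_i=S_{i-1}\oplus_{d_{i-1},k_i}\N$ and applying the Abhyankar--Moh inequality to each truncated semigroup $S_{i-1}$ of degree $d_0/d_{i-1}$, which yields $\frac{d_{i-2}}{d_{i-1}}k_{i-1}<\big(\frac{d_0}{d_{i-1}}\big)^2$ and hence the bound after an integrality argument. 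You instead chain (CS2) into $e_{i-1}v_i<e_iv_{i+1}<\cdots<e_{h-1}v_h<n^2$ (the chain already recorded in Remark \ref{v0>v1}) to get $d_ik_i<d_0^2/d_{i-1}$ directly for every $i$, and then sharpen the strict inequality to the stated bound by noting that both $d_ik_i$ and $d_0^2/d_{i-1}$ are multiples of $d_i$; this is the same integrality sharpening the paper uses, just packaged slightly differently. Your route is more self-contained (no appeal to the gluing decomposition, no case split on $h$) and treats all indices, including $i=1$ and $i=h$, uniformly; the paper's route is longer but makes explicit the recursive gluing structure that it reuses later (e.g., in the conductor formula \eqref{CondGSI} and in Proposition \ref{n=p^k}). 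Your additional sufficiency direction, which the statement does not strictly require but the paper's first sentence and Algorithm \ref{algoritmo2} implicitly rely on, is also correct.
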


\begin{proof}
Notice that the condition $\gcd\big(\frac{d_{i-1}}{d_i},k_i\big)=1$ for $i=1,\ldots ,h$, guarantees that the characteristic sequence $(n,d_1k_1,\ldots,d_hk_h)$ has associated sequence of divisors equals to $D$. It remains to check the bounds on the $k_i$. The case $h=1$ is trivially verified. Suppose $h\geq 2$. Let $v_0=d_0$, and $v_i=d_ik_i$ for any $i=1,\ldots ,h$, and assume that $(v_0,\ldots ,v_h)$ is the characteristic sequence of an A-M semigroup $S$ of degree $n=v_0$, and associated sequence of divisors $(d_0,\ldots,d_h)$. Since A-M semigroups are SI-semigroups, by the definition of $v_i$, $d_{i-1}k_i<d_{i+1}k_{i+1}$ for all $i\in \{1,\ldots, h-1\}$. Hence, $d_{i-1}k_i+1\le d_{i+1}k_{i+1}$. Moreover, since $1<v_1<v_0$, $1\le k_1\le \frac{d_0}{d_1}-1$, and since $d_{h-1}k_h<d_0^2$, $k_h\le \frac{d_0^2}{d_{h-1}}-1$. This is enough to finish the proof when $h=2$. Consider now that $h\ge 3$. By Proposition \ref{proposition_construction_A-M}, $S=S_{h-1}\oplus _{d_{h-1},k_h} \N$ where $S_{h-1}$ is the A-M semigroup of degree $\frac{d_0}{d_{h-1}}$ generated by the characteristic sequence $\left(\frac{d_0}{d_{h-1}},\frac{d_1}{d_{h-1}}k_1,\ldots,\frac{d_{h-2}}{d_{h-1}}k_{h-2},k_{h-1}\right)$.  So $\frac{d_{h-2}}{d_{h-1}}k_{h-1}<\left(\frac{d_0}{d_{h-1}}\right)^2$,  then $k_{h-1}\le\frac{d_0^2}{d_{h-1}d_{h-2}}-1$, hence $d_{h-1}k_{h-1}\le\frac{d_0^2}{d_{h-2}}-d_{h-1}$.
In general, using this process, for any $i\in\{3,\ldots ,h-1\}$, $S_{i}=S_{i-1}\oplus _{d_{i-1},k_i} \N$ where $S_{i-1}$ is the A-M semigroup of degree $\frac{d_0}{d_{i-1}}$ generated by the characteristic sequence $\left(\frac{d_0}{d_{i-1}},\frac{d_1}{d_{i-1}}k_1,\ldots,\frac{d_{i-2}}{d_{i-1}}k_{i-2},k_{i-1}\right)$. For any $i$, the condition that $S_{i-1}$ is an A-M semigroup implies that $\frac{d_{i-2}}{d_{i-1}}k_{i-1}< (\frac{d_0}{d_{i-1}})^2$, and $d_{i-1}k_{i-1}\le \frac{d_0^2}{d_{i-2}}-d_{i-1}$.
\end{proof}

Algorithm \ref{algoritmo2} provides a computational method to compute all A-M semigroups with fixed degree. Note that this algorithm supports parallel deployment.
\begin{algorithm}[h]
	\BlankLine
	\KwIn{$n\in\N\setminus\{0,1\}$.}
	\KwOut{The set of the characteristic sequences of A-M semigroups with degree $n$. }
	\BlankLine
	
	${\cal F}_n\leftarrow\emptyset$\;
	${\mathcal D}\leftarrow\big\{ d=(d_0,\ldots,d_h) \mid h\in \N,\, d \text{ is a sequence of divisors of }n \big\}$\;
    \While {${\mathcal D}\neq \emptyset$}
 	{
 	    $d=(d_0,\ldots,d_h)\leftarrow\text{First}(\cal D)$\;
 	    $\mathcal K\leftarrow \Big\{(k_1,\ldots ,k_h)\in \N^h\mid \gcd(k_1,n)=\cdots =\gcd(k_h,n)=1,\, 1\le k_1\le \frac{d_0}{d_1}-1,\, \text{and }d_{i-2}k_{i-1}+1\le d_{i}k_{i}\le  \frac{d_0^2}{d_{i-1}}-d_{i} \text{ for any }i=2,\ldots ,h
 	    \Big\}$\;
 		\ForAll{$(k_1,\ldots ,k_h)\in \mathcal K$}
     	{
	        ${\cal F}_n\leftarrow{\cal F}_n\cup \{(n,d_1k_1,\ldots,d_hk_h)\}$\;
 			
 		}
 		${\mathcal D}\leftarrow{\mathcal D}\setminus \{d\}$\;
 	}
	\Return ${\cal F}_n$ \;
\caption{Computation of the set of A-M semigroups with degree $n$.}\label{algoritmo2}
	
\end{algorithm}

Table \ref{tabla_AM_grado_8} shows all the A-M semigroups with degree eight. Note that, in this example, all the even integers in $[n-1,(n-1)(n-2)]$ are the conductor of some A-M semigroup of degree $n=8$. Throughout this work, we prove that this is true for all even degrees.
\begin{table}[]
    \centering
    \begin{tabular}{c|c|c}
 \text{Characteristic} & \text{Sequences of} & \text{Conductors}\\ 
  \text{sequences} & \text{divisors} & \\ \hline
(8,2,9) & (8,2,1) & 8 \\ \hline
 (8,2,11) & (8,2,1) & 10 \\ \hline
 (8,2,13) & (8,2,1) & 12 \\ \hline
 (8,3) & (8,1) & 14 \\ \hline
 (8,2,15) & (8,2,1) & 14 \\ \hline
 (8,2,17) & (8,2,1) & 16 \\ \hline
 (8,2,19) & (8,2,1) & 18 \\ \hline
 (8,2,21) & (8,2,1) & 20 \\ \hline
 (8,2,23) & (8,2,1) & 22 \\ \hline
 (8,4,9) & (8,4,1) & 24 \\ \hline
 (8,2,25) & (8,2,1) & 24 \\ \hline
 (8,2,27) & (8,2,1) & 26 \\ \hline
 (8,4,10,21) & (8,4,2,1) & 28 \\ \hline
 (8,5) & (8,1) & 28 \\ \hline
 (8,2,29) & (8,2,1) & 28 \\ \hline
 (8,4,10,23) & (8,4,2,1) & 30 \\ \hline
 (8,4,11) & (8,4,1) & 30 \\ \hline
 (8,2,31) & (8,2,1) & 30 \\ \hline
 (8,4,10,25) & (8,4,2,1) & 32 \\ \hline\
 (8,4,10,27) & (8,4,2,1) & 34 \\ \hline
 (8,6,25) & (8,2,1) & 36 \\ \hline
 (8,4,10,29) & (8,4,2,1) & 36 \\ \hline
 (8,4,13) & (8,4,1) & 36 \\ \hline
 (8,6,27) & (8,2,1) & 38 \\ \hline
 (8,4,10,31) & (8,4,2,1) & 38 \\ \hline
 (8,6,29) & (8,2,1) & 40 \\ \hline
 (8,4,14,29) & (8,4,2,1) & 40 \\ \hline
 (8,6,31) & (8,2,1) & 42 \\ \hline
 (8,4,14,31) & (8,4,2,1) & 42 \\ \hline
 (8,7) & (8,1) & 42 \\ \hline
 (8,4,15) & (8,4,1) & 42 \\ \hline
    \end{tabular}
    \caption{All the A-M semigroups of degree 8.}
    \label{tabla_AM_grado_8}
\end{table}

If $n>2$, $(d_0,\ldots,d_h=1)$ is a sequence of divisors of degree $n=d_0$ and we consider values $k_i$  as in Proposition \ref{AM-alg2}  then, by \eqref{conductor}, the conductor of the A-M semigroup
$S(n,d_1k_1,\ldots,d_hk_h)$ is
\begin{equation}\label{conductor_from_seq_div}
c(S(n,d_1k_1,\ldots,d_hk_h))=\sum_{i=1}^h(d_{i-1}-d_i)k_i-n+1.
\end{equation}

Note that the characteristic sequence given in \eqref{eq:divisors} is $(n,d_1k_1,\ldots,d_hk_h)$ for the maximum values of $k_i$ obtained in Proposition \ref{AM-alg2}. Moreover, as a consequence of that we have the following corollary.

\begin{corollary}
Let $n>2$ be a natural number. Fix a sequence of divisors $(d_0=n,d_1,\ldots, d_h)$ of $n$. The A-M semigroup of the form 
$S(n,d_1k_1,\ldots,d_hk_h)$ having the minimum conductor is given by $k_1=1$ and $k_i=\frac{d_{i-2}}{d_i}k_{i-1}+1$, for $i\in \{2,\ldots, h\}$ and   its conductor is $\sum_{i=0}^{h-2}d_i(d_{i+1}-1)$.
\end{corollary}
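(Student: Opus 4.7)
The plan is to combine a greedy minimization argument with a telescoping evaluation. By formula \eqref{conductor_from_seq_div},
\[
c(S(n,d_1k_1,\ldots,d_hk_h))=\sum_{i=1}^h(d_{i-1}-d_i)k_i-n+1,
\]
which is an affine function of $(k_1,\ldots,k_h)$ with strictly positive coefficients, since $d_{i-1}>d_i$. The admissible tuples are those described in Proposition~\ref{AM-alg2}, and there the lower bound on $k_i$ is a non-decreasing function of $k_{i-1}$. Hence, minimizing $c$ amounts to choosing each $k_i$ as small as possible, for $i$ ranging from $1$ to $h$, provided the coprimality condition $\gcd(d_{i-1}/d_i,k_i)=1$ is preserved at every step.

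For $i=1$, the smallest admissible value is $k_1=1$, which trivially satisfies $\gcd(d_0/d_1,1)=1$. For $i\ge 2$, the divisibility chain $d_i\mid d_{i-1}\mid d_{i-2}$ implies that the multiples of $d_i$ in $[d_{i-2}k_{i-1}+1,\,+\infty)$ start at $d_{i-2}k_{i-1}+d_i$, forcing the greedy choice $k_i=\tfrac{d_{i-2}}{d_i}k_{i-1}+1$. The required coprimality is automatic from this formula, since
\[
k_i-1=\frac{d_{i-2}}{d_{i-1}}\cdot\frac{d_{i-1}}{d_i}\,k_{i-1}
\]
shows that $d_{i-1}/d_i$ divides $k_i-1$, whence $\gcd(d_{i-1}/d_i,k_i)=1$. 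This is the most delicate point of the argument: a priori, the coprimality constraint could force an upward jump in the greedy minimum, but the identity $k_i\equiv 1\pmod{d_{i-1}/d_i}$ rules this out automatically.

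It then remains to evaluate the conductor of the distinguished tuple. Using $\sum_{i=1}^h(d_{i-1}-d_i)=d_0-d_h=n-1$, one rewrites $c=\sum_{i=1}^h(d_{i-1}-d_i)(k_i-1)$, and the $i=1$ term vanishes. I would next establish, by induction on $i$, the closed form
\[
d_{i-1}d_i(k_i-1)=\sum_{j=0}^{i-2}d_jd_{j+1},\qquad i\ge 2,
\]
which follows from $k_i-1=(d_{i-2}/d_i)k_{i-1}$ together with the decomposition of $\sum_{j=0}^{i-2}d_jd_{j+1}$ as $d_{i-2}d_{i-1}+\sum_{j=0}^{i-3}d_jd_{j+1}$. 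Substituting this into $c=\sum_{i=2}^h(d_{i-1}-d_i)(k_i-1)$ and rewriting $(d_{i-1}-d_i)/(d_{i-1}d_i)=1/d_i-1/d_{i-1}$ puts the sum in Abel-summable form; telescoping and using $d_h=1$ collapses the expression to $\sum_{i=0}^{h-2}d_i(d_{i+1}-1)$, as claimed. With the coprimality hurdle cleared, this last step is pure bookkeeping.
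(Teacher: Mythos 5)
Your proposal is correct and follows essentially the same route as the paper: greedy minimization of each $k_i$ justified by the positivity of the coefficients $d_{i-1}-d_i$ in \eqref{conductor_from_seq_div}, the identical key observation that $d_{i-1}/d_i$ divides $k_i-1$ (so the coprimality constraint is automatically satisfied by the greedy choice), and a telescoping evaluation of the resulting conductor. The only difference is in the bookkeeping of the final computation --- the paper derives a closed form for $k_q$ and telescopes $\sum_i d_{i-1}k_i-\sum_i d_ik_i$ directly, whereas you reduce to $\sum_i(d_{i-1}-d_i)(k_i-1)$ and Abel-sum --- which is an equivalent rearrangement of the same algebra.
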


\begin{proof}

By \eqref{conductor_from_seq_div} and according to Proposition \ref{AM-alg2}, the minimum value of conductors of A-M semigroups of the form $S(n,d_1k_1,\ldots,d_hk_h)$ holds for the minimum values of $k_i$ for $i=1,\ldots , h$. 

For $h=1$, the sequence of divisors $(d_0,d_1,\ldots, d_h)$ is $(n,1)$. So, $S(n,1)=\N$ and its conductor is zero, which is equal to the empty sum $\sum_{i=0}^{-1}d_i(d_{i+1}-1)$.

Assume that $h\ge 2$. Put  $k_1=1$, and for any  $i\in \{2,\ldots ,h\}$, we put $k_i = \Big\lceil \frac{d_{i-2}k_{i-1}+1}{d_i}\Big\rceil = \frac{d_{i-2}k_{i-1}}{d_i}+1$. Notice that, since $\frac{d_{i-1}}{d_i}$ is a divisor of $\frac{d_{i-2}}{d_i}$, we have $\gcd\big(\frac{d_{i-1}}{d_i},\frac{d_{i-2}k_{i-1}}{d_i}+1\big)=1$. Thus, the integers $k_1=1$ and $k_i=\frac{d_{i-2}}{d_i}k_{i-1}+1$, for $i\in \{2,\ldots, h\}$, satisfy Proposition \ref{AM-alg2}, and then, $S(n,d_1k_1,\ldots,d_hk_h)$ is the A-M semigroup having the minimum conductor for the sequence of divisors $(d_0=n,d_1,\ldots, d_h)$.

Note that, for $k_1=1$, and $k_{q}=\frac{d_{q-2}k_{q-1}}{d_q}+1$, we have that $k_2= \frac{d_0}{d_2}+1$, and, in general, $k_q= \frac{1}{d_{q-1}d_q}\sum_{i=0}^{q-3}d_id_{i+1}+\frac{d_{q-2}}{d_q}+1$ for $q\in \{2,\ldots ,h\}$.

After \eqref{conductor_from_seq_div}, we get $c(S(n,d_1k_1,\ldots,d_hk_h))=\sum_{i=1}^h(d_{i-1}-d_i)k_i-n+1$. Thus,
\begin{eqnarray*}
c(S(n,d_1k_1,\ldots,d_hk_h))&=& \sum_{i=1}^h d_{i-1} k_i-\sum_{i=1}^h d_ik_i-n+1\\    
&=&  \sum_{i=1}^h d_{i-1} k_i-\sum_{i=2}^h d_ik_i-d_1 - n+1\\ 
&=&  \sum_{i=1}^h d_{i-1} k_i-\sum_{i=2}^h d_i\left(\frac{d_{i-2}}{d_i}k_{i-1}+1\right)-d_1 - n+1 \\
&=& d_{h-1}k_h - \sum_{i=0}^h d_i + 1 \\
&=& d_{h-1}\left( \frac{1}{d_{h-1}}\sum_{i=0}^{h-3} d_id_{i+1}  +d_{h-2} +1 \right) - \sum_{i=0}^h d_i + 1 \\
&=& \sum_{i=0}^{h-2} d_id_{i+1}  - \sum_{i=0}^{h-2} d_i = \sum_{i=0}^{h-2}d_i(d_{i+1}-1).
\end{eqnarray*}
\end{proof}

\begin{remark}
Given an integer $n=p_1^{\alpha_1}\cdots p_t^{\alpha_t}\ge 2$ with $p_1>p_2>\cdots >p_t$ prime integers, the maximum length of any sequence of divisors of $n$ is $\Lambda(n)=\sum_{i=1}^t\alpha_i$.
Moreover, fix $h$ a length of the sequences of divisors of $n$ allows us to provide a lower bound for the integers which could be realizable as the conductor of an A-M semigroup of degree $n$ with an associated sequence of divisors with length greater than or equal to $h$:  let us consider that $T_h=\min \{\sum_{i=0}^{h-2}d_i(d_{i+1}-1)\mid (d_0,d_1,\ldots, d_h) \text{ is a sequence of divisors of }n\}$, if $c<T_h$, then $c$ is not realizable as the conductor of an A-M semigroup of degree $n$ with an associated sequence of divisors with length greater than or equal to $h$.

An interesting point observed in all of our examples is that for any sequences $d,d' \in \cup_{h=1}^{\Lambda(n)} T_h$, if $d<_{\rm lex} d'$, their minimum conductors verify that $\sum_{i=0}^{h-2}d_i(d_{i+1}-1)< \sum_{i=0}^{h'-2}d'_i(d'_{i+1}-1)$, where the sequences of divisors are completed with zeros when compared with $<_{\rm lex}$ the lexicographical order. 
An example of this is shown for $n=105$ in Table \ref{table_cs_150_4}.
\begin{table}[h]
\centering
\begin{tabular}{c|c}
 \text{Sequence} & \text{Minimum}\\ 
  \text{of divisors} & \text{conductor}  \\ \hline
  $(105,1,0,0)$ & $0$ \\\hline
 $(105,3,1,0)$ & $210$ \\\hline
 $(105,5,1,0)$ & $420$ \\\hline
 $(105,7,1,0)$ & $630$ \\\hline
 $(105,15,1,0)$ & $1470$ \\\hline
 $(105,15,3,1)$ & $1500$ \\\hline
 $(105,15,5,1)$ & $1530$ \\\hline
 $(105,21,1,0)$ & $2100$ \\\hline
 $(105,21,3,1)$ & $2142$ \\\hline
 $(105,21,7,1)$ & $2226$ \\\hline
 $(105,35,1,0)$ & $3570$ \\\hline
 $(105,35,5,1)$ & $3710$ \\\hline
 $(105,35,7,1)$ & $3780$ \\\hline
\end{tabular}
    \caption{List of sequences of divisors for degree $105$ and minimum conductor of the A-M semigroup associated.} 
    \label{table_cs_150_4}
\end{table}

We raise the following question: Is it true that for fixed $n\in \N$ and any $d,d' \in \cup_{h=1}^{\Lambda(n)} T_h$ with $d<_{\rm lex} d'$ the value of $\sum_{i=0}^{h-2}d_i(d_{i+1}-1)$ is at most $\sum_{i=0}^{h'-2}d'_i(d'_{i+1}-1)$?
\end{remark}

\section{Conductors of A-M semigroups of even degree}
\label{proof}

Let ${\cal F}_n$ be the set of Abhyankar-Moh semigroups of degree $n>2$. Let ${\cal E}_n=[n-1, (n-1)(n-2)]\cap 2\mathbb Z$. The cardinality of ${\cal E}_n$ is
\begin{equation}
\label{car} \sharp {\cal E}_n=\left\{\begin{array}{ll}
\frac{(n-1)(n-3)+2}{2}& \hbox{\rm when $n$ is odd}\\
 &\\
\frac{(n-1)(n-3)+1}{2}& \hbox{\rm when $n$ is even.}\\
\end{array}
\right.
\end{equation}

For any $c\in {\cal E}_n$, is there
$S\in {\cal F}_n$ such that $c(S)=c$? We will prove that the answer is positive when $n$ is even. However, if $n$ is odd then this is not true. Indeed, suppose that $S$ is an A-M semigroup of a prime degree $n>2$. In this case,
$S=S(v_0,v_1)$, where $n=v_0$ is greater than  $v_1$, $v_1>1$ and $\gcd(v_0,v_1)=1$. Hence,
\[{\cal F}_n=\{(n,v_1)\;:\; 2\leq v_1\leq n-1\}\]
and the cardinality of ${\cal F}_n$ is $n-2$. By (\ref{car}), the cardinality of ${\cal E}_n$ is $\frac{(n-1)(n-3)}{2}+1$. Observe that $\frac{(n-1)(n-3)}{2}+1> n-2$ for any $n>3$. So, for $n>3$ we conclude that there are values in ${\cal E}_n$ which are not realizable as  conductors of an A-M semigroup of degree $n$. Remember that the only A-M semigroup of degree 2 is $\mathbb N$. Suppose that $n$ is an even integer greater than 2. First, we will prove that for any $c\in [n-1, \frac{n^2-2}{2})\cap 2\N$ there is an A-M semigroup of degree $n$ and conductor $c$.

\begin{lemma}
Let $n\ge 4$ be an even integer. The only A-M semigroup $S$ of degree $n$ with $\mathrm c(S)=n$ is the semigroup generated by the characteristic sequence $(n,2,n+1)$.
\end{lemma}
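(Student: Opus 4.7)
The plan is to stratify A-M semigroups $S=S(v_0,\ldots,v_h)$ of degree $n$ by the length $h$ of the characteristic sequence and eliminate every case but one using the conductor formulas already established.

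I would first dispose of $h=1$. Here $S=S(n,v_1)$ with $\gcd(n,v_1)=1$ and $v_1<n$ by Remark~\ref{v0>v1}, so \eqref{conductor} gives $c(S)=(n-1)(v_1-1)$. Forcing this to equal $n$ requires $n-1\mid n$, which fails for $n\ge 4$. Hence no A-M semigroup of degree $n$ with embedding dimension two has conductor $n$.

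Next I would eliminate $h\ge 3$ via the preceding corollary. For any sequence of divisors $(n=d_0,d_1,\ldots,d_h)$, the minimum possible conductor over A-M semigroups with that divisor sequence is $\sum_{i=0}^{h-2} d_i(d_{i+1}-1)$. When $h\ge 3$ one has $d_1\ge 2$ and $d_2\ge 2$, so the sum contains both $d_0(d_1-1)\ge n$ and $d_1(d_2-1)\ge 2$, forcing $c(S)\ge n+2$.

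This leaves $h=2$, with divisor sequence $(n,d_1,1)$ and $d_1\mid n$, $2\le d_1<n$. The minimum conductor is $n(d_1-1)$, which already exceeds $n$ as soon as $d_1\ge 3$; since $n$ is even the only admissible choice is $d_1=2$, and for this divisor sequence the minimum is attained uniquely at $k_1=1$ and $k_2=(d_0/d_2)k_1+1=n+1$, namely at the sequence $(n,2,n+1)$. Any other admissible $(k_1,k_2)$ strictly increases the affine expression $c=(n-2)k_1+k_2-(n-1)$ beyond $n$ (or else violates the constraint $k_2\ge nk_1+1$ of Proposition~\ref{AM-alg2}), so $(n,2,n+1)$ is the sole candidate. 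It is indeed an A-M sequence: the gcd conditions $\gcd(n/2,1)=1$ and $\gcd(2,n+1)=1$ hold because $n$ is even, and the Abhyankar--Moh inequality $2(n+1)<n^2$ is immediate for $n\ge 4$. I do not expect a genuine obstacle; the only delicate point is correctly reading off the constraints of Proposition~\ref{AM-alg2}, while the corollary performs the decisive minimization.
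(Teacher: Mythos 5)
Your proposal is correct, and it follows the same broad strategy as the paper (stratify by the length $h$ of the characteristic sequence and rule out everything except $(n,2,n+1)$), but the tools you use in each stratum are genuinely different. The paper argues directly from the conductor formula $c(S)=\sum_{i=1}^h(n_i-1)v_i-v_0+1$: for $h\ge 3$ it uses the ordering $v_1<n<v_2<v_3$ to force $c(S)>n$; for $h=1$ it bounds $(n-1)(v_1-1)$ below using $v_1\ge 3$ ($v_1$ must be odd since $n$ is even); and for $h=2$ it successively pins down $n_2=2$, then $v_1=2$, then $v_2=n+1$ by elementary estimates. You instead invoke the divisor-sequence parametrization of Proposition~\ref{AM-alg2} together with the corollary computing the minimum conductor $\sum_{i=0}^{h-2}d_i(d_{i+1}-1)$ for a fixed divisor sequence: this kills $h\ge 3$ in one line (the sum already contains $n(d_1-1)\ge n$ and $d_1(d_2-1)\ge 2$), forces $d_1=2$ when $h=2$, and reduces uniqueness to minimizing the affine expression $(n-2)k_1+k_2-(n-1)$ under the constraint $k_2\ge nk_1+1$, which you carry out correctly. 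Your route is arguably cleaner and more systematic since it reuses the machinery of Section~\ref{computing}, at the cost of depending on the corollary's minimality statement; the paper's argument is more self-contained but requires the ad hoc chain of estimates in the $h=2$ case. All the individual steps you give check out, including the existence verification for $(n,2,n+1)$.
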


\begin{proof}
The characteristic sequence $(n,2,n+1)$ determines an A-M semigroup with conductor $n$. Let us prove the uniqueness. Let $S$ be an A-M semigroup of degree $n=\mathrm c(S)$ determined by the characteristic sequence $(v_0=n,v_1,v_2,\ldots,v_h)$. By \cite[Proposition 1.2]{Ba-GB-P}, $\mathrm c(S)=\sum_{i=1}^h(n_i-1)v_i-v_0+1$  with $n_i>1$ for $1\le i\le h$. Suppose that $h\ge 3$. Since $v_1<n<v_2<v_3<\cdots <v_h$, we get $\mathrm c(S)\geq \sum_{i=1}^3(n_i-1)v_i-n+1 >n$. On the other hand, if  $h=1$ then $c(S)=nv_1-n-v_1+1$. Since $n>v_1\ge 3$, $\mathrm c(S)\ge 3n-n-v_1+1>n+1$.

Let us suppose that $h=2$, so $e_2=1$. Since $v_2\ge n+1$ then $\mathrm c(S)> (n_1-1)v_1+(n_2-2)n$. Observe that for  $n_2>2$ we get $\mathrm c(S)>n$. So, $n_2=e_1=2$, $n_1=n/2$ and $v_1$ is an even integer. If $v_1\ge 4$ then $\mathrm c(S)\ge 2n-2>n$. Hence $v_1=2$ and consequently $\mathrm c(S)=n$ if and only if $v_2=n+1$.
\end{proof}

\begin{lemma} \label{lema: even}
Let  $n\ge 4$ be an even integer. For any $q\in\big[0,\frac{(n-1)(n-2)+n}{4}-1\big)$, the semigroup generated by $(n,2,n+1+2q)$ is an A-M semigroup of conductor $n+2q$.
\end{lemma}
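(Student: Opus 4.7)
The plan is essentially direct verification: check that $(n,2,n+1+2q)$ satisfies (CS1) and (CS2), check the Abhyankar--Moh inequality (which is exactly what the upper bound on $q$ encodes), and then compute the conductor via formula \eqref{conductor}.

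First I would set $v_0=n$, $v_1=2$, $v_2=n+1+2q$ and compute the associated divisors: since $n$ is even, $e_0=n$ and $e_1=\gcd(n,2)=2$; since $n+1+2q$ is odd, $e_2=\gcd(2,n+1+2q)=1$. So (CS1) holds with $h=2$ and $n_1=n/2$, $n_2=2$. For (CS2) with $h=2$ the only inequality to check is $e_0v_1<e_1v_2$, i.e.\ $2n<2(n+1+2q)$, which is obvious for $q\ge 0$. Hence $(n,2,n+1+2q)$ is a characteristic sequence.

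Next I would verify the Abhyankar--Moh inequality $e_{h-1}v_h=e_1v_2<n^2$, namely
\[
2(n+1+2q)<n^{2}\ \Longleftrightarrow\ q<\frac{n^{2}-2n-2}{4}.
\]
A short calculation shows $(n-1)(n-2)+n=n^{2}-2n+2$, so $\frac{(n-1)(n-2)+n}{4}-1=\frac{n^{2}-2n-2}{4}$, which is precisely the stated upper bound on $q$. Therefore the semigroup $S=S(n,2,n+1+2q)$ is an A-M semigroup of degree $n$.

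Finally, to compute the conductor I would plug into \eqref{conductor}: with $n_1=n/2$ and $n_2=2$,
\[
c(S)=(n_1-1)v_1+(n_2-1)v_2-v_0+1=\left(\tfrac{n}{2}-1\right)\cdot 2+(n+1+2q)-n+1=n+2q,
\]
which is the claimed value. Since each step is a straightforward manipulation, there is no real obstacle; the only thing to be careful about is matching the arithmetic of the upper bound on $q$ with the A--M inequality, which is what forces the precise constant $\frac{(n-1)(n-2)+n}{4}-1$ in the hypothesis.
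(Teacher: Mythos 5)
Your proof is correct and takes the same route as the paper, which simply asserts that $(n,2,n+1+2q)$ is a characteristic sequence with divisor sequence $(n,2,1)$ satisfying the Abhyankar--Moh inequality and has conductor $n+2q$; you have merely spelled out the verification of (CS1), (CS2), the equivalence $2(n+1+2q)<n^2 \Leftrightarrow q<\frac{(n-1)(n-2)+n}{4}-1$, and the conductor computation, all of which check out. No further comment is needed.
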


\begin{proof}
The characteristic sequence  $(n,2,n+1+2q)$ determines an A-M semigroup for any even integer $q< \frac{(n-1)(n-2)+n}{4}-1$ with sequence of divisors $(n,2,1)$ and $\mathrm c\big(S (n,2,n+1+2q) \big)=n+2q$.
\end{proof}

\begin{corollary}\label{coro: even}
For any even integer $n\ge 4$ and $c\in [n-1, \frac{n^2-2}{2})\cap 2\N$, there is at least an  A-M  semigroup of degree $n$ and conductor $c$. 
\end{corollary}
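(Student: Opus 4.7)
The plan is to deduce this directly from Lemma \ref{lema: even}, which already delivers an explicit family of A-M semigroups of degree $n$ indexed by a nonnegative integer parameter $q$, with conductor $n+2q$. The corollary simply asks us to check that, as $c$ ranges over the required set of conductors, the corresponding value of $q$ stays inside the interval allowed by the lemma.

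First I would observe that because $n$ is even, $n-1$ is odd, so
\[
[n-1,\tfrac{n^{2}-2}{2})\cap 2\N \;=\; \{n,\,n+2,\,n+4,\ldots\}\cap[0,\tfrac{n^{2}-2}{2}),
\]
and therefore every admissible conductor $c$ can be written uniquely as $c=n+2q$ with $q\in\N$. The condition $c<\frac{n^{2}-2}{2}$ is then equivalent to
\[
q \;<\; \frac{n^{2}-2n-2}{4}.
\]

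The next step is a one-line algebraic check that this upper bound on $q$ coincides with the one appearing in Lemma \ref{lema: even}:
\[
\frac{(n-1)(n-2)+n}{4}-1 \;=\; \frac{n^{2}-2n+2}{4}-1 \;=\; \frac{n^{2}-2n-2}{4}.
\]
Thus every admissible $c$ corresponds to some $q\in\bigl[0,\tfrac{(n-1)(n-2)+n}{4}-1\bigr)$, and Lemma \ref{lema: even} applied to this $q$ yields the A-M semigroup generated by the characteristic sequence $(n,2,n+1+2q)$, whose conductor is exactly $c$. Lemma \ref{lema: even} also covers the lower endpoint $c=n$ (the case $q=0$, which recovers the previous lemma on uniqueness of the conductor-$n$ semigroup).

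There is no genuine obstacle here: the whole content is packaged in Lemma \ref{lema: even}. The only point one must be careful about is matching the two upper bounds exactly, so that no even value in the interval is left uncovered and none lies outside the range of applicability of the lemma. Once that arithmetic is recorded, the corollary is immediate.
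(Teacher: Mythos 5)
Your proposal is correct and matches the paper's (implicit) argument: the corollary is stated without proof precisely because it is the immediate translation of Lemma \ref{lema: even}, obtained by writing $c=n+2q$ and noting that the bound $c<\frac{n^2-2}{2}$ is equivalent to $q<\frac{(n-1)(n-2)+n}{4}-1$. Your explicit verification of that arithmetic identity is exactly the content the paper leaves to the reader.
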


Now, the target will be to prove that, if $n\ge 4$ is an even integer and $c\in [\frac{n^2-2}{2},(n-1)(n-2)]\cap 2\N$, then there is an A-M semigroup of degree $n$ and conductor $c$. We will do it in several steps.

\begin{lemma}
\label{lema:AM 2r}
Let $n$ be a natural number such that $n=2r\geq 4$, where  $r$ is odd. Let $k\in \mathbb N$ be co-prime with $r$ and $1\leq k\leq r-1$. The sequence $(n,2k,v_2)$  determines an A-M semigroup of degree $n$ if and only if it verifies the next  conditions:
\begin{enumerate}
    \item $v_2$ is an odd number,
    \item $kn+1\leq v_2\leq 2r^2-1$.
\end{enumerate}
\end{lemma}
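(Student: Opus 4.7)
The plan is to prove both directions by directly unwinding the definition of an A-M semigroup for a length-three sequence $(v_0,v_1,v_2)=(n,2k,v_2)$ with $n=2r$. First I would compute the associated divisor sequence. Since $\gcd(k,r)=1$, we have $e_0=2r$ and $e_1=\gcd(2r,2k)=2\gcd(r,k)=2$; condition \hyperlink{cs1}{(CS1)} then forces $e_2=1$, which is equivalent to $v_2$ being odd. This immediately pins down condition (1) as a pure consequence of \hyperlink{cs1}{(CS1)}.

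Next I would translate the remaining semigroup conditions into inequalities on $v_2$. Condition \hyperlink{cs2}{(CS2)} with $h=2$ reads $e_0 v_1<e_1 v_2$, i.e., $2r\cdot 2k<2v_2$, which is exactly $v_2>kn$, equivalent to $v_2\ge kn+1$. The Abhyankar-Moh inequality $e_{h-1}v_h<n^2$ becomes $2v_2<4r^2$, i.e., $v_2\le 2r^2-1$. Thus (2) captures precisely the pair of inequalities coming from \hyperlink{cs2}{(CS2)} and the A-M bound.

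For the forward direction, assuming $(n,2k,v_2)$ defines an A-M semigroup of degree $n$, the three conditions above are forced, giving (1) and (2). For the reverse direction I would check that under (1) and (2) the sequence is indeed a characteristic sequence satisfying the A-M inequality: the chain $e_2=1<e_1=2<e_0=2r$ holds since $r\ge 2$ (as $n\ge 4$); \hyperlink{cs2}{(CS2)} is vacuous beyond the inequality already verified because $h=2$; and the A-M inequality reduces to $v_2\le 2r^2-1$, which is exactly the upper bound in (2). The constraint $1\le k\le r-1$ only plays the role of ensuring $v_1=2k\ge 1$ and $v_1<v_0=2r$ (cf.\ Remark \ref{v0>v1}), so no further checks are needed.

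There is no real obstacle here: the argument is an exercise in matching the defining inequalities to the bounds. The only subtle point to be explicit about is that the coprimality $\gcd(k,r)=1$ is essential in forcing $e_1=2$ (so that $e_2=1$ becomes equivalent to $v_2$ odd); without this, $e_1$ could jump and the characterization of (1) would fail.
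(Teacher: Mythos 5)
Your proof is correct and matches the paper's approach: the paper's entire proof is the one-line remark that the lemma ``follows directly from the definition of an A-M semigroup,'' and your write-up is precisely that unwinding, with $e_1=2\gcd(r,k)=2$ forcing (1) via \hyperlink{cs1}{(CS1)}, and \hyperlink{cs2}{(CS2)} plus the Abhyankar-Moh bound $e_1v_2<n^2$ giving the two inequalities in (2). Nothing is missing; you have simply made explicit the routine verification the authors left to the reader.
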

\begin{proof}
It follows directly from the definition of an A-M semigroup.
\end{proof}

Suppose that $(n=2r,2k,v_2)$ defines an A-M semigroup  of degree $n$, where $1\leq k\leq r-1$ with $\gcd(r,k)=1$, so $e_0=n=2r>e_1=2> e_2=1$. Denote this semigroup by $\mathcal S(k,v_2)$. The conductor of $\mathcal S(k,v_2)$  equals $(r-1)2k+v_2-2r+1$. Hence if $\mathcal S(k,v_2)$ and $\mathcal S(k,v_2')$ are two A-M semigroups of degree $n$ with $v_2<v'_2$ then $\mathrm c(\mathcal S(k,v_2))<\mathrm c(\mathcal S(k,v'_2))$.
If we fix $k$, varying $v_2$ we get A-M semigroups attaining the following values for their conductor
\[
\mathrm c(\mathcal S(k,v_2))\in [2kn-2(k+r)+2,kn-2(k+r)+2r^2]\cap 2\mathbb N.
\]
Denote by $I_l:=[2ln-2(l+r)+2,ln-2(l+r)+2r^2]\cap 2\mathbb N$ for any $1\leq l \leq r-1$. Note that for any $l_1,l_2\in \N$ with $l_1<l_2$, $I_{l_1}\cap I_{l_2}$ is the empty set if and only if $l_2\ge \frac{r^2+l_1r-l_1}{2r-1}$. Since $\frac{r^2+l_1r-l_1}{2r-1}>\frac{r+l_1}{2}$, we obtain that $I_{l_1}\cap I_{l_2}\neq \emptyset$ for every $l_1\in \N$ and $l_2\in (l_1,\frac{r+l_1}{2}]\cap \N$. We want to show that by also varying the values of $k$ we construct A-M semigroups covering all possible conductors. For that we prove the following.

\begin{lemma}
\label{lema:aritmetico}
Let $r,l\in \mathbb N$ where $r$ is odd, $l\leq r-2$ and $\gcd(l,r)=1$. There is $k\in \left( l,\frac{l+r}{2}\right]\cap \mathbb N $ coprime with $r$.
\end{lemma}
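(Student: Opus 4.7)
The plan is to split on the parity of $l$, producing an explicit closed-form witness in each case.

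\smallskip
\noindent\emph{Case $l$ odd.} Take $k:=(l+r)/2$. Since $l$ and $r$ are both odd, $l+r$ is even and $k$ is a positive integer; it satisfies $l<k\le (l+r)/2$ because $r>l$. From $2k=l+r\equiv l\pmod r$ together with $\gcd(2,r)=1$ (as $r$ is odd) and $\gcd(l,r)=1$, one concludes $\gcd(k,r)=\gcd(2k,r)=1$.

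\smallskip
\noindent\emph{Case $l$ even.} Now $(l+r)/2$ is no longer an integer (since $l+r$ is odd), so the previous construction fails. Because $r$ is odd and $l$ is even with $l\le r-2$, in fact $l\le r-3$, and hence $r-l$ is an odd integer at least $3$; in particular $r-l$ is not a power of $2$. Let $t\ge 1$ be the unique positive integer with $2^t<r-l<2^{t+1}$, and take $k:=r-2^t$. Then $k>l$ follows from $r-l>2^t$, and $k\le (l+r)/2$ from $r-l<2^{t+1}$ (a direct rearrangement). Finally $\gcd(k,r)=\gcd(r-2^t,r)=\gcd(2^t,r)=1$ because $r$ is odd, so $k$ is an integer in $(l,(l+r)/2]$ coprime to $r$, as required.

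\smallskip
The main obstacle is identifying the correct substitute for $(l+r)/2$ when $l$ is even; the key insight is that subtracting a power of $2$ from $r$ automatically preserves coprimality with the odd modulus $r$, while varying the exponent offers enough scale to guarantee that exactly one of $r-2,\,r-4,\,r-8,\dots$ lands in the required half-interval. The coprimality hypothesis $\gcd(l,r)=1$ is used crucially in the odd case (through $\gcd(2k,r)=\gcd(l,r)$), and plays no explicit role in the even case beyond ruling out $l=r-2$.
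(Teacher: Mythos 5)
Your proof is correct. The even case is essentially the paper's own argument: the authors take $k=r-2^m$ with $m$ minimal such that $r-2^m\le\frac{l+r}{2}$, and your $t$ with $2^t<r-l<2^{t+1}$ is exactly that exponent, characterized via the dyadic interval rather than via minimality; the coprimality step $\gcd(r-2^t,r)=\gcd(2^t,r)=1$ is identical. The difference is that the paper runs this single construction uniformly for all $l$ (it never invokes $\gcd(l,r)=1$, and it silently covers the case where $r-l$ is a power of $2$ --- possible only for $l$ odd --- since then $r-2^m$ is just the endpoint $\frac{l+r}{2}$), whereas you split on the parity of $l$ and, for $l$ odd, take the endpoint $\frac{l+r}{2}$ directly, proving coprimality from $\gcd(2k,r)=\gcd(l+r,r)=\gcd(l,r)=1$. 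Your odd case is a clean shortcut that genuinely uses the hypothesis $\gcd(l,r)=1$, though the case split is not actually necessary. One small slip in your closing commentary: in the even case it is the parity of $l$ and $r$, not the coprimality hypothesis, that rules out $l=r-2$ (indeed $\gcd(r-2,r)=1$ holds automatically for odd $r$, so coprimality excludes nothing there); this does not affect the proof itself, whose body correctly derives $l\le r-3$ from parity.
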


\begin{proof}
Let $m:=\min\left\{n\in \mathbb N \;:\; r-2^n \leq \frac{l+r}{2}\right\}.$ Notice that such an integer exists since $l \leq r-2$ by hypothesis. We claim that $k:=r-2^m \in \left( l,\frac{l+r}{2}\right]$ and it is coprime with $r$. Indeed, $\gcd(r-2^m,r)=\gcd(2^m,r)=1$, due to $r$ being odd. Moreover, by definition of $m$, $k\leq \frac{l+r}{2}$. Assume by contradiction that $k= r-2^m\leq l$, then $2r-2^m \leq l+r$, that is,  $ r-2^{m-1} \leq \frac{l+r}{2}, $ which contradicts the minimality of $m$. Hence, $k\in \left( l,\frac{l+r}{2}\right]\cap \mathbb N $ is as desired.
\end{proof}

Hence we conclude,
\begin{proposition}\label{n=2r}
Let $n$ be a natural number such that $n=2r$, where  $r>1$ is odd. For any even number $c$ with $n-1\leq c\leq (n-1)(n-2)$ there is an A-M semigroup of degree $n$ and conductor equals $c$.
\end{proposition}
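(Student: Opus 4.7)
My approach is to combine Corollary \ref{coro: even} with the family of intervals $\{I_l\}_l$ introduced just before Lemma \ref{lema:aritmetico}. Corollary \ref{coro: even} already exhibits an A-M semigroup of degree $n$ for every even $c \in [n-1, \tfrac{n^2-2}{2})$, which for $n = 2r$ with $r$ odd is $[n-1, 2r^2 - 2] \cap 2\N$. Hence it suffices to realize every even $c$ in the remaining interval $[2r^2, (n-1)(n-2)] \cap 2\N$. For this I invoke Lemma \ref{lema:AM 2r}: for each integer $k$ with $1 \leq k \leq r-1$ and $\gcd(k,r) = 1$, letting $v_2$ range over the admissible odd values produces A-M semigroups $\mathcal S(k,v_2)$ whose conductors exhaust $I_k \cap 2\N$.

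The core idea is to construct greedily a chain $1 = l_1 < l_2 < \cdots < l_N = r-1$ of integers coprime to $r$ for which the real intervals $I_{l_i}$ and $I_{l_{i+1}}$ overlap. Starting at $l_1 = 1$, as long as $l_i \leq r - 2$ I apply Lemma \ref{lema:aritmetico} to pick $l_{i+1} \in (l_i, \tfrac{r+l_i}{2}] \cap \N$ coprime to $r$. The resulting sequence is strictly increasing and bounded above by $r-1$; since Lemma \ref{lema:aritmetico} is applicable at every step with $l_i \leq r - 2$, termination forces $l_N = r - 1$ (which is itself coprime to $r$). The elementary inequality $\tfrac{r+l}{2} < \tfrac{r^2 + l(r-1)}{2r-1}$ (cross-multiplying gives the positive difference $\tfrac{r-l}{2(2r-1)}$) then shows that each chosen $l_{i+1}$ meets the overlap criterion stated before Lemma \ref{lema:aritmetico}, so $I_{l_i} \cap I_{l_{i+1}} \neq \emptyset$.

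Because the endpoints of every $I_l$ are even integers, real overlap of consecutive intervals translates directly into the connectivity of $\bigcup_{i=1}^N I_{l_i} \cap 2\N$ as a block of consecutive even integers. Substituting in the definition $I_l = [2ln - 2(l+r) + 2,\, ln - 2(l+r) + 2r^2]$ yields $\min I_1 = 2r = n$ and $\max I_{r-1} = 4r^2 - 6r + 2 = (n-1)(n-2)$, so $\bigcup_{i=1}^N I_{l_i}$ realizes every even integer in $[n, (n-1)(n-2)]$. Concatenating with Corollary \ref{coro: even} covers every even integer in $[n-1, (n-1)(n-2)]$, which is the required conclusion. No step looks genuinely hard: Lemma \ref{lema:aritmetico} supplies the coprime-to-$r$ successor, the arithmetic identity handles the overlap bound, and the endpoint computation is just bookkeeping; the only place where care is needed is to confirm that the first real overlap $I_1 \cap I_{l_2}$ reaches across the value $2r^2$ where Corollary \ref{coro: even} leaves off, which is automatic from the overlap inequality itself.
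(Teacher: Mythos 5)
Your proposal is correct and follows essentially the same route as the paper: realize each $I_l\cap 2\N$ via the sequences $(n,2k,v_2)$ of Lemma \ref{lema:AM 2r}, use Lemma \ref{lema:aritmetico} together with the overlap criterion to chain the intervals from $l=1$ up to $l=r-1$, and compute the endpoints $n$ and $(n-1)(n-2)$. The only cosmetic differences are that you build a greedy subchain (with a correct termination argument) where the paper simply takes all $l\in[1,r-1]$ coprime to $r$ in increasing order, and that your appeal to Corollary \ref{coro: even} is redundant since $I_1$ already covers that range.
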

\begin{proof}
It is  enough to consider the characteristic sequences $(n=2r, 2k, v_2)$ defining A-M semigroups. Indeed, take $L:=\{l\in [1,r-1]\cap \N\mid \gcd(l,r)=1\}=\{l_1, l_2, \ldots,l_s\}$, with $l_t<l_{t+1}$ for any $t\in\{1,\ldots,s\}$. Lemma \ref{lema:AM 2r} and its following discussion tell us that by considering this characteristic sequences we construct A-M semigroups whose conductors cover all the values in $I_{l_1}\cup \ldots \cup I_{l_s}$. Moreover, by Lemma \ref{lema:aritmetico}, we know that $I_{l_i}\cap I_{l_{i+1}}\neq \emptyset$ for every $i\in\{1, \ldots, s-1\}$. Hence, we cover all even integers from the minimum value in $I_{l_1}$ to the maximum value in $I_{l_s}$. Since $l_1=1$ and $l_s=r-1$, these values are $n$ and $(n-1)(n-2)$, respectively. This concludes the proof.
\end{proof}

\begin{remark} \label{n=4} After Algorithm \ref{algoritmo1}, we get that the A-M semigroups of degree $4$ are $S_1=S(4,2,5)$, $S_2=S(4,3)$ and $S_3=S(4,2,7)$, where
$c(S_1)=4$ and $c(S_2)=c(S_3)=6$. Hence any $c\in \mathcal E_4$ is the conductor of an A-M semigroup of degree 4. Observe that the sequence of divisors of $S_1$ and $S_3$ is $(4,2,1)$ and the sequence of divisors of $S_2$ is $(4,1)$.
\end{remark} 

\begin{proposition}\label{n par}
Let $n$ be a natural number such that $n=2^{k}r$, where $k\geq 1$ and  $r>1$ is odd. For any even number $c$ with $n-1\leq c\leq (n-1)(n-2)$ there is an A-M semigroup of degree $n$ and conductor equals $c$.
\end{proposition}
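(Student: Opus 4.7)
The plan is to proceed by induction on $k$ in the decomposition $n=2^kr$ with $r>1$ odd. The base case $k=1$ is exactly Proposition \ref{n=2r}. For the inductive step $k\geq 2$, I set $m=n/2=2^{k-1}r$; this is an even integer (since $k-1\geq 1$) of the same form, so by the inductive hypothesis every even integer in $[m-1,(m-1)(m-2)]$ is realized as the conductor of some A-M semigroup of degree $m$.

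Corollary \ref{coro: even} already furnishes A-M semigroups of degree $n$ of every even conductor in $[n-1,(n^2-2)/2)$, so it remains to realize every even conductor in the target interval $T=[n^2/2,(n-1)(n-2)]\cap 2\N$. The main tool is Proposition \ref{proposition_construction_A-M} applied with $d=2$: given an A-M semigroup $S'$ of degree $m$ with minimal generators $b_1<\cdots<b_{t-1}$ and an odd integer $f$ satisfying $2\gcd(b_1,\ldots,b_{t-2})b_{t-1}<f<n^2/2$, the gluing $S'\oplus_{2,f}\N$ is an A-M semigroup of degree $n$ whose conductor, by formula \eqref{CondGSI}, equals $2c(S')+f-1$.

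To realize a target $c\in T$, the strategy is to write $c=2c'+f-1$ and choose an admissible pair $(c',f)$. Fixing $c'$ even in $[m-1,(m-1)(m-2)]$ (always available by induction) and letting $f$ run over the admissible odd values produces an arithmetic progression of step $2$ of candidate conductors; as $c'$ varies these progressions shift, and I must verify that together they exhaust $T$. The extreme $c=(n-1)(n-2)$ is handled by taking $S'=\langle m-1,m\rangle$ with $f=2m^2-1$, while conductors near the lower end $n^2/2$ are covered by gluing with the Lemma \ref{lema: even} family $S(m,2,c'+1)$, whose minimal generators $\{2,c'+1\}$ yield the loosest possible lower bound $f>4(c'+1)$ and hence the widest admissible range of $f$.

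The principal obstacle will be verifying complete coverage of $T$ between these extremes. Since the lower bound on $f$ depends on the particular $S'$, the argument will require, for each $c'$, flexibility in the choice of A-M semigroup of conductor $c'$, preferring one with small $\gcd(b_1,\ldots,b_{t-2})b_{t-1}$. Equivalently, I would parametrize the construction by the sequences of divisors of $n$ via Proposition \ref{AM-alg2} (using the families from $(n,2,1)$, $(n,4,1)$, $(n,4,2,1)$, and so on) and show that the resulting arithmetic progressions tile $T$. This is the step at which an arithmetic lemma analogous to Lemma \ref{lema:aritmetico}, now about integers coprime to the even modulus $m=2^{k-1}r$ rather than the odd $r$, enters to guarantee the required overlaps.
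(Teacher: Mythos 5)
Your overall strategy coincides with the paper's: induction on $k$ with base case Proposition \ref{n=2r}, Corollary \ref{coro: even} for the even conductors in $[n-1,\frac{n^2-2}{2})$, and gluings $S'\oplus_{2,f}\N$ of A-M semigroups $S'$ of degree $m=n/2$, with conductor $2\mathrm c(S')+f-1$ by \eqref{CondGSI}, for the remaining range. The problem is that the decisive step --- showing that these gluings actually cover every even number in $T=[\frac{n^2}{2},(n-1)(n-2)]\cap 2\N$ --- is not carried out: you explicitly label it ``the principal obstacle'' and defer it. Worse, the route you propose for closing it (choosing, for each $c'$, a semigroup $S'_{c'}$ with small $\gcd(b_1,\ldots,b_{t-2})b_{t-1}$, plus an analogue of Lemma \ref{lema:aritmetico} for the even modulus $m$) is a detour: with $d=2$ the condition $\gcd(d,f)=1$ just says $f$ is odd, so no coprimality lemma is needed; and the upper end of the admissible range of $f$ is essentially independent of $S'_{c'}$, since the Abhyankar--Moh inequality for any A-M semigroup of degree $m$ forces $\gcd(b_1,\ldots,b_{t-2})b_{t-1}<m^2$, whence $2\gcd(b_1,\ldots,b_{t-2})b_{t-1}\le 2m^2-2=\frac{n^2}{2}-2$ and the odd value $f_1=\frac{n^2}{2}-1$ is always admissible (and, after a short check, so is $f_2=\frac{n^2}{2}-3$).

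The missing idea is to invert your quantifiers: instead of fixing $c'$ and sweeping $f$ over its whole admissible range, fix the two values $f_1=\frac{n^2}{2}-1$ and $f_2=\frac{n^2}{2}-3$ and sweep $c'$ over all even numbers in $[m-1,(m-1)(m-2)]$, which is exactly what the induction hypothesis provides. Writing $c(c',f_i)=2c'+f_i-1$, one has $c(c',f_1)-c(c',f_2)=2$ and $c(c',f_2)-c(c'-2,f_1)=2$, so alternating $f_1$ and $f_2$ while decreasing $c'$ by $2$ produces every even number from $c((m-1)(m-2),f_1)=(n-1)(n-2)$ down to $c(m,f_2)=\frac{n^2}{2}+n-4$. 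Your ``Lemma \ref{lema: even} family'' idea, specialised to the single semigroup $S(m,2,m+1)$ of conductor $m$ and glued against all odd $f\in[2n+5,\frac{n^2}{2}-1]$, supplies the even numbers in $[3n+4,\frac{n^2}{2}+n-2]$, and these three intervals, together with Corollary \ref{coro: even}, overlap and exhaust $[n-1,(n-1)(n-2)]$ once $n\ge 8$ (automatic here, as $n=2^kr\ge 12$). Your treatment of the single extreme value $(n-1)(n-2)$ via $\langle m-1,m\rangle$ and $f=2m^2-1$ is correct but covers only one point of $T$; without the interleaving argument the bulk of $T$ remains unproved.
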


\begin{proof}
We use induction over $k$.
Notice that for $k=1$, the statement holds by Proposition \ref{n=2r}. 

Let us assume that $k\geq 2$ and that the proposition holds for $k-1$. In addition, Remark \ref{n=4} already covers the case $n=4$, thus we can further assume that $n\geq 8$. 

We first observe that, by Corollary \ref{coro: even}, for every even $c\in I_1 := [n-1, \frac{n^2-2}{2})$, there exists an A-M semigroup of degree $n$ and conductor $c$. Thus, it remains to prove that the same is true for any $c\in \left[\frac{n^2-2}{2}, (n-1)(n-2) \right]$. 

On the one hand, as explained in Lemma \ref{lema: even}, $S'=\langle \frac{n}{2}, 2, \frac{n}{2}+1\rangle$ is an A-M semigroup with conductor $c(S')=\frac{n}{2}$. One can check that the gluing $S=S'\bigoplus_{2,d} \mathbb N = \langle n, 4, n+2, f\rangle$ is an A-M semigroup for every odd number $f \in \left[2n+5, \frac{n^2}{2}-1\right]$. Furthermore, \eqref{CondGSI} tells us that
$$c(S) = 2\frac{n}{2} + f -1 = n+f-1.$$
Hence, through this gluing we have shown that for any even number $c $ belonging to $I_2:= \left[3n+4, \frac{n^2}{2}+n-2\right]$, the semigroup $S=\langle n, 4, n+2, c+1-n \rangle$ is an A-M semigroup with conductor $c$.

On the other hand, by the induction hypothesis, for $m=\frac{n}{2}=2^{k-1}r$, we can guarantee that, for every even number $c' \in [m-1, (m-1)(m-2)]$, there exists an A-M semigroup $S'_{c'}$ of degree $m$ and conductor $c'$. It is not hard to check that the gluing
$$ S(c',f_i) = S'_{c'}\oplus_{2,f_i}\mathbb N $$
is an A-M semigroup of degree $2m=n$, for both values: $f_1=\frac{n^2}{2}-1$ and $f_2=\frac{n^2}{2}-3$. Besides, again by \eqref{CondGSI}, the conductor of $S(c',f_i)$ is
$$ c(c',f_i) = 2c'+f_i-1. $$
Now, notice the relations
$$ c(c',f_2) = c(c',f_1) -2$$
and 
$$ c(c'-2,f_1)= c(c', f_2) -2, $$
which imply that, by making this gluing and recursively decrease the value of $c'$ alternating it with both $f_1$ and $f_2$, we can assure the existence of an A-M of degree $n$ and conductor $c$, for every even number from the maximum possible value, that is,
$$c((m-1)(m-2), f_1) = 2\left(\frac{n}{2}-1 \right)\left(\frac{n}{2}-2\right) + \frac{n^2}{2}-1 = (n-1)(n-2), $$
and all the way down to
$$c(m,f_2) = 2\frac{n}{2} + \frac{n^2}{2}-3-1 =\frac{n^2}{2}+n-4. $$

Overall, we have shown that, for any even number $c \in I_1\cup I_2\cup I_3$ there exists an A-M semigroup of degree $n$ and conductor $c$, where $I_3 = \left[\frac{n^2}{2}+n-4, (n-1)(n-2)\right]$. Clearly, $I_2$ and $I_3$ overlap, and we have that $I_1 \cup I_2 \cup I_3=[n-1,(n-1)(n-2)]$ as long as $n\geq 8$. Thus, the statement follows.
\end{proof}

Let $p> 1$ be a prime number and $k$ be an integer greater than or equal to two. Note that, for every A-M semigroup $S$ of degree $n=p^k$, $\mathrm c(S)$ is a multiple of $p-1$ (see equation \eqref{conductor}). So, $\mathrm c(S) \in [n-1,(n-1)(n-2)]\cap (p-1)\N$. 

Now, we will study the A-M semigroups of degree $n=p^k$, where $p$ is a prime number and $k\in \mathbb N$, $k>1$. For any natural number  $k_1$ with $1\leq k_1\leq p^{k-1}-1$, we define $I_{k_1}:=[1,p^{2k-1}-p^kk_1-1]$, and
\[
\mathcal A_{k_1}:=\Big\{(p^{k}-1)(pk_1-1)+i(p-1)\;:\;i\in I_{k_1}\text{ and }\gcd(p,i)=1\Big\}.
\]

In the following proposition, we prove that for a fixed $n=p^k$ with $p$ a prime number, every value in $[n-1,(n-1)(n-2)]\cap (p-1)\N$ is the conductor of some A-M semigroup of degree $n$. Furthermore, the conductors of the A-M semigroups of degree $n=p^k$ are provided explicitly for any prime integer $p$.

\begin{proposition}\label{n=p^k}
Let $p\ge 2$ be a prime number, and $k\ge 2$ an integer. Any conductor of an A-M semigroup with degree $n=p^k$ is obtained from, at most, two types of sequences of divisors:
\begin{enumerate}
    \item For $p$ a prime odd number, the sequences are $(p^k,1)$ and $(p^k,p,1)$;
    \item For $p=2$ and $k=2$, the sequence is $(4,2,1)$;
    \item For $p=2$ and $k=3$, the sequences are $(8,2,1)$ and $(8,4,2,1)$;
    \item For $p=2$ and $k\ge 4$, the sequence is $(2^k,2,1)$.
\end{enumerate}
Moreover, for any odd prime number $p$, the set of the conductors of A-M semigroups of degree $n$ is
\[
\left(\bigcup_{i=1,\, \gcd(p,i)=1}^{p^k-2}\big\{i(p^k-1)\big\}\right) \bigcup \left(
\bigcup_{k_1=1,\, \gcd(p,k_1)=1}^{p^{k-1}-1} \mathcal A_{k_1}\right).
\]
\end{proposition}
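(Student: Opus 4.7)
The divisors of $n = p^k$ form the chain $1 \mid p \mid p^2 \mid \cdots \mid p^k$, so every sequence of divisors of $n$ has the form $D = (p^k = p^{a_0}, p^{a_1}, \ldots, p^{a_h} = 1)$ with $k = a_0 > a_1 > \cdots > a_h = 0$. By \eqref{conductor_from_seq_div}, for any admissible parameter tuple $(k_1, \ldots, k_h)$ as in Proposition \ref{AM-alg2}, the conductor of the corresponding A-M semigroup is
\[
c \;=\; \sum_{i=1}^{h} (p^{a_{i-1}} - p^{a_i})\, k_i \;-\; p^k + 1.
\]
Each factor $p^{a_{i-1}} - p^{a_i}$ and $p^k - 1$ is divisible by $p - 1$, so $c$ lies in $(p - 1)\mathbb{Z}$, confirming the divisibility remark preceding the statement.

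\medskip

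For odd $p$, my plan is to show sequence by sequence that any conductor arising from a sequence $D$ with $h \geq 2$ and $a_1 \geq 2$ already arises from $(p^k, p, 1)$. Given admissible $(k_1, \ldots, k_h)$, I attempt the choice $k_1' = k_1$ for the new sequence, which forces
\[
k_2' \;=\; \frac{1}{p-1}\!\left(\sum_{i=2}^{h}(p^{a_{i-1}}-p^{a_i})\,k_i \;-\; p(p^{a_1-1}-1)\,k_1\right).
\]
This is a positive integer by termwise divisibility. A computation modulo $p$, using $p \mid p^{a_i}$ for $i < h$ and $p^{a_h} = 1$, gives $(p-1)k_2' \equiv -k_h \pmod p$, hence $k_2' \equiv k_h \pmod p$, so $p \nmid k_2'$ by the coprimality of $k_h$ required in Proposition \ref{AM-alg2}. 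The range conditions $p^k k_1' + 1 \leq k_2' \leq p^{2k-1} - 1$ should follow from the propagating inequalities $p^{a_{i-2}}k_{i-1} + 1 \leq p^{a_i}k_i$ together with the A-M inequality $p^{a_{h-1}}k_h < p^{2k}$ for the original sequence. When $c$ is close to the maximum $(p^k-1)(p^k-2)$, the upper bound on $k_2'$ becomes tight with $k_1' = k_1$; in that regime one should instead increase $k_1'$ (up to $p^{k-1}-1$) and a parallel computation carries through. Conductors with $c < p^k(p-1)$ lie outside the $(p^k,p,1)$ range altogether and are covered by the $(p^k,1)$ family, which yields precisely the conductors $(p^k-1)(k_1-1)$ with $2 \leq k_1 \leq p^k-1$ and $p \nmid k_1$.

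\medskip

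For $p = 2$ the same reduction applies to sequences of length $\geq 3$, with the simplification that $p - 1 = 1$ removes the divisibility step. The small cases $k = 2$ (where only $(4, 2, 1)$ is non-trivial) and $k = 3$ (where $(8, 2, 1)$ and $(8, 4, 2, 1)$ are genuinely both needed, as can be read off from Table \ref{tabla_AM_grado_8}) I would handle by direct enumeration. For $k \geq 4$ the reduction above absorbs all longer sequences into $(2^k, 2, 1)$.

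\medskip

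Once the reduction is in place, the explicit formula in the odd case is an enumeration of the two retained families. The family $(p^k, 1)$ contributes $\{(p^k - 1)(k_1 - 1) : 2 \leq k_1 \leq p^k - 1, \gcd(p, k_1) = 1\}$, which matches the first union of the statement after reindexing. The family $(p^k, p, 1)$, via the substitution $i = k_2 - p^k k_1$ in the conductor formula, yields $(p^k - 1)(pk_1 - 1) + (p - 1) i$ with $i \in I_{k_1}$ and $\gcd(p, i) = 1$, producing $\bigcup_{k_1} \mathcal{A}_{k_1}$. The main obstacle throughout is the verification of the range bounds on $k_2'$ in the reduction step, which is where both the structural constraints from Proposition \ref{AM-alg2} and the A-M inequality for the non-listed sequence $D$ must be combined carefully; the case split on whether $k_1'$ can be kept equal to $k_1$ or must be increased is the most delicate part.
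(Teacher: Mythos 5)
Your overall strategy is genuinely different from the paper's. The paper fixes the target family $(p^k,p,1)$, computes that it realizes exactly the sets $\mathcal A_{k_1}$, shows via an overlap condition (their \eqref{eq:1 de p^k}) that these sets tile all of $[n(p-1),(n-1)(n-2)]\cap(p-1)\N$ except for the values in the companion sets $\mathcal A'_{k_1}$, and then proves by a congruence obstruction modulo $p$ (applied to the gluing formula $\mathrm c(S''\oplus_{p^q,f}\N)=p^q\mathrm c(S'')+(p^q-1)(f-1)$) that no A-M semigroup of degree $p^k$ whatsoever can have conductor in any $\mathcal A'_{k_1}$. You instead propose a conductor-preserving transport of parameters from an arbitrary divisor sequence to $(p^k,p,1)$. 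That is a legitimate alternative plan (and, if completed, it would even render the paper's exclusion of $\mathcal A'_{k_1}$ unnecessary for the containment direction), but the step you yourself flag as delicate is exactly where the proof is missing, and it cannot be waved through.

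Concretely, with $k_1'=k_1$ the upper bound $k_2'\le p^{2k-1}-1$ genuinely fails: take $p=3$, $k=3$, divisor sequence $(27,9,1)$, $k_1=1$, $k_2=80$, i.e.\ the characteristic sequence $(27,9,80)$ with conductor $632$; your formula forces $k_2'=317>242=3^5-1$, and one must raise $k_1'$ all the way to $8=p^{k-1}-1$ (skipping $k_1'\in\{3,6\}$, and noting $k_1'=7$ still gives $k_2'=245>242$) before landing on the admissible $(27,24,233)$. So ``increase $k_1'$ and a parallel computation carries through'' is precisely the nontrivial content: you must show that for every conductor produced by a longer sequence there is some $k_1'$ coprime to $p$ whose induced $k_2'$ lands in $[p^kk_1'+1,\,p^{2k-1}-1]$. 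As $k_1'$ grows the upper threshold is fixed while the lower one rises, so this existence claim is an interval-overlap estimate equivalent to the paper's verification that $\min C(k_1+1)\le\max C(k_1)+p-1$ (respectively $\min C(k_1+2)\le\max C(k_1)+p-1$ when $p\mid k_1+1$); for $p=2$ that estimate is exactly what breaks for $k=2,3$ and produces the exceptional cases. Until it is supplied, the reduction --- and with it the ``Moreover'' description and the $p=2$, $k\ge4$ case --- is not proved. Two smaller remarks: your modulo-$p$ computation $k_2'\equiv k_h$ is correct and is the right way to preserve the divisor sequence; and your enumeration of the length-two family as $(p^k-1)(v_1-1)$ with $p\nmid v_1$ is the accurate one --- note it does not literally match the condition $\gcd(p,i)=1$ appearing in the first union of the statement, so the claimed ``reindexing'' deserves a second look.
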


\begin{proof}
Let $(p^k,pk_1,v_2)$ be a characteristic sequence where $\gcd(p,k_1)=\gcd(p,v_2)=1$. Note that the semigroup $S(p^k,pk_1,v_2)$  is an A-M semigroup if and only if $p^{k}k_1<v_2<p^{2k-1}$. 
So, we assume that $k_1\in [1,p^{k-1}-1]$ and $v_2\in[p^k k_1+1,p^{2k-1}-1]$, with $k_1$ and $v_2$ coprime with $p$. Consider $v_2=p^k k_1+i$ with $i\in I_{k_1}=[1,p^{2k-1}-p^k k_1-1]$ and such that $i\neq 0\;\mod p$. 

Thus, $\mathrm c(S)= (p^{k}-1)(pk_1-1)+i(p-1)$. Denote $\mathrm c(p^k,pk_1,p^k k_1+i)= (p^{k}-1)(pk_1-1)+(p-1)i$. This even value belongs to $C(k_1)\cap (p-1)\N$ where $C(k_1)= [(p^{k}-1)(pk_1-1)+(p-1),(p^{k}-1)(pk_1-1)+(p-1)(p^{2k-1}-p^k k_1-1)]$, for any $i\in I_{k_1}$. Note that, for $p=2$, $\mathrm c(2^k,2k_1,2^k k_1+i)$ take all the even numbers in $C(k_1)$. Besides, for any integer $k_1'$ such that $k_1<k_1'$, we have that
\begin{equation}\label{eq:1 de p^k}
(C(k_1)\cup C(k'_1))\cap 2\N \cap (p-1)\N= [\min C(k_1),\max C(k'_1)]\cap 2\N \cap (p-1)\N
\end{equation}
if and only if $\min C(k'_1)\le \max C(k_1)+\max \{2,p-1\}$.

Assume that $p\ge 3$ and suppose $\gcd(p,k_1+1)=1$. In that case, the condition \eqref{eq:1 de p^k} holds for $k_1+1\le p^{k-1}-1$ iff $\min C(k_1+1)\le \max C(k_1)+p-1$, that is, iff $(p-1)(p^{2k-1}-p^k k_1-1)-p(p^k-1)\ge 0$. Since $k_1\le p^{k-1}-2$,
\begin{multline*}
(p-1)(p^{2k-1}-p^k k_1-1)-p(p^k-1) \ge (p-1)(2p^k -1)-p(p^k-1)
\\
= p^k(p-2)+1> 0
\end{multline*}
Thus, the condition \eqref{eq:1 de p^k} holds when $\gcd(p,k_1+1)=1$. 
In the other case, $k_1+1$ and $p$ are no coprime numbers, we have that  $\gcd(p,k_1+2)=1$, and the inequality $\min C(k_1+2)\le \max C(k_1)+p-1$ is equivalent to $(p-1)(p^{2k-1}-p^k k_1-1)-2p(p^k-1)\ge 0$. Again, since $k_1+2\le p^{k-1}-1$, $(p-1)(p^{2k-1}-p^k k_1-1)-2p(p^k-1) \ge p^k(p-3)+p+1 > 0$. We conclude that, for $p\ge 3$, the condition \eqref{eq:1 de p^k} always holds. That means that, for every $k_1\in [1,p^{k-1}-1]$ and $i\in I_{k_1}$ with $i\mod p\neq 0$, $\mathrm c(p^k,pk_1,p^k k_1+i)= (p^{k}-1)(pk_1-1)+i(p-1)\in \mathcal A=[\min C(1),\max C(p^{k-1}-1)]\cap (p-1)\N=[n(p-1),(n-1)(n-2)]\cap (p-1)\N$. Note that $\mathcal A$ is the union of the disjoint sets
\[
\mathcal A_{k_1}= \bigcup_{k_1=1,\, \gcd(p,k_1)=1}^{p^{k-1}-1}\Big\{c(p^k,pk_1,p^k k_1+i)\mid i\in I_{k_1}, \text{ and }\gcd(p,i)=1\Big\},
\]
and
\[
\mathcal A'_{k_1}= \bigcup_{k_1=1,\, \gcd(p,k_1)=1}^{p^{k-1}-1}\Big\{c(p^k,pk_1,p^k k_1+i)\mid i\in I_{k_1}, \text{ and }\gcd(p,i)\neq 1\Big\}.
\]
The elements of $\mathcal A'_{k_1}$ are not the conductors of any A-M semigroup of degree $n$. Suppose there exists an A-M semigroup $S'$ of degree $n$ and such that its conductor belongs to $\mathcal A'_{k_1}$. If the characteristic sequence of $S'$ is $(p^k,v_1)$ with $\gcd(p,v_1)=1$, then $\mathrm c(S')= (p^k-1)(v_1-1) =(p^{k}-1)(pk_1-1)+i(p-1)$, and $(p^k-1)v_1=(p^{k}-1)pk_1+i(p-1)$. But it is not possible since $\gcd(p^{k}-1,p)=\gcd(p,v_1)=1$. If the length of the characteristic sequence of $S'$ is greater than or equal to three, by Proposition \ref{proposition_construction_A-M} and Corollary \ref{CondGSI}, there exist an A-M semigroup $S''$, $q\in \N$, and $f\in \N$ with $1<q<k$ and $\gcd(p,f)=1$ such that $S'=S''\oplus _{p^q,f} \N$. Thus, $\mathrm c(S')=p^q\mathrm c(S'')+(p^q-1)(f-1)=(p^{k}-1)(pk_1-1)+i(p-1)$, and $(p^q-1)f= (p^k-1)pk_1+i(p-1)-p^q\mathrm c(S'')-p^q(p^{k-q}-1)$. Since $\gcd(p^{q}-1,p)=\gcd(p,f)=1$, it does not hold.

Note that, by Proposition \ref{proposition_construction_A-M} and Corollary \ref{CondGSI}, $n(p-1)$ is the smallest conductor of an A-M semigroup of degree $n$ that can be obtained from characteristic sequences of length greater than or equal to three. Moreover, the set of the conductors of the A-M semigroups of degree $n$ with a characteristic sequence of length two is $\cup_{v_1=2,\, \gcd(p,v_2)=1}^{p^k-1}\{(v_1-1)(p^k-1)\}$.

Summarising, we have just proved that, for every $p\ge 3$ prime odd number and $k\ge 2$, the conductor of any A-M semigroup with degree $n=p^k$, is equal to $i(p^k-1)$ with $i\in[1,p^k-2]$ satisfying $i\mod p\neq 0$, or is equal to $(p^{k}-1)(pk_1-1)+i(p-1)$ where $k_1\in [1,p^{k-1}-1]$ and $i\in I_{k_1}$ such that $i\mod p\neq 0$. For the first type, their sequences of divisors are $(p^k,1)$, and $(p^k,p,1)$ for the second one.

For $p=2$, $\gcd(2,k_1+1)\neq 1$, but $\gcd(2,k_1+2)=1$. Hence, the conditions \eqref{eq:1 de p^k} holds if and only if $2^{2k-1}-2k_1-2^{k+2}+3 \ge 0$. Using the upper bound of $k_1$, $2^{2k-1}-2k_1-2^{k+2}+3\ge 2^{2k-1}-5\cdot 2^{k}+5$. It is easy to prove that $2^{2k-1}-5\cdot 2^{k}+5\ge 0$ for any $k\ge 4$. Therefore, for $k\ge 4$, every even number in $[n-1,(n-1)(n-2)]$ is realizable as the conductor of an A-M semigroup with the sequence of divisors $(2^k,2,1)$. The particular cases $k=2$, and $k=3$ are showed in Remark \ref{n=4} and Table \ref{tabla_AM_grado_8}, respectively.
\end{proof}

By Proposition \ref{n par} and Proposition \ref{n=p^k}, for $p=2$, we have,

\begin{maintheorem}\label{teorema: n par}
Let $n>2$ be an even natural number. For any even number $c$ with $n-1\leq c\leq (n-1)(n-2)$, there is an A-M semigroup of degree $n$ and conductor equals $c$.
\end{maintheorem}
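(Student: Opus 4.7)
The plan is to split the even natural numbers $n>2$ into two disjoint families according to the 2-adic factorization of $n$, and in each family to invoke a preparatory result already established earlier in the paper. Write $n=2^{k}r$ with $k\geq 1$ and $r\geq 1$ odd. Since $n$ is even and greater than $2$, exactly one of the following occurs: either $r>1$, or $r=1$ and $n$ is a pure power of two.

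In the first case, $r>1$, I would invoke Proposition \ref{n par} directly: its hypotheses are precisely $n=2^{k}r$ with $k\geq 1$ and $r>1$ odd, and its conclusion is already the statement we need, namely that every even $c\in[n-1,(n-1)(n-2)]$ is attained as the conductor of some A-M semigroup of degree $n$. In the second case, $n=2^{k}$ with $k\geq 2$, I would apply Proposition \ref{n=p^k} with the prime $p=2$. The proof of that proposition, for $k\geq 4$, shows via the inequality $2^{2k-1}-5\cdot 2^{k}+5\geq 0$ that every even integer in $[n-1,(n-1)(n-2)]$ is realised as a conductor from the sequence of divisors $(2^{k},2,1)$. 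The remaining low-dimensional cases $k=2$ and $k=3$ are settled by the direct enumerations recorded in Remark \ref{n=4} and Table \ref{tabla_AM_grado_8}.

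Together these two cases exhaust all even $n>2$, and the theorem follows. The main obstacle has already been absorbed into Propositions \ref{n par} and \ref{n=p^k}; the final assembly is essentially bookkeeping on whether $n$ has an odd prime factor or is a pure power of two. I would structure the written proof as a two-line appeal to these results, perhaps prefaced by a single sentence noting the dichotomy $n=2^{k}r$ with $r$ odd, in order to make clear that Propositions \ref{n par} and \ref{n=p^k} together cover every even $n$ strictly greater than $2$.
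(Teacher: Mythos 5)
Your proposal is correct and follows exactly the same route as the paper: the authors also deduce Theorem~\ref{teorema: n par} by combining Proposition~\ref{n par} (for $n=2^{k}r$ with $r>1$ odd) with Proposition~\ref{n=p^k} applied with $p=2$ (for $n=2^{k}$, with the cases $k=2,3$ handled by Remark~\ref{n=4} and Table~\ref{tabla_AM_grado_8}). Your explicit remark that the dichotomy on whether $n$ has an odd prime factor is exhaustive is a welcome clarification of what the paper leaves implicit.
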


\noindent {\bf Open question:} Characterize the  values of conductors of A-M semigroups of odd degree. Note that Proposition \ref{n=p^k} solves this question for degree $p^k$ with $p$ any prime integer.\\

\noindent {\bf Funding}. The second and fourth-named authors were supported partially by Junta de Andaluc\'{\i}a research groups FQM-343. The first-named author  was supported by the grant PID2019-105896GB-I00 funded by MCIN/AEI/10.13039/501100011033. The first and third-named authors were supported partially by MACACO (ULL research project).

\medskip
\noindent
{\small Evelia Rosa Garc\'{\i}a Barroso / Luis Jos\'e Santana S\'anchez\\
Departamento de Matem\'aticas, Estad\'{\i}stica e I.O. \\
Secci\'on de Matem\'aticas, Universidad de La Laguna\\
Apartado de Correos 456\\
38200 La Laguna, Tenerife, Spain\\
e-mail: ergarcia@ull.es / lsantans@ull.es}

\medskip

\noindent {\small Juan Ignacio Garc\'{\i}a-Garc\'{\i}a \\
Departamento de Matem\'aticas/INDESS (Instituto Universitario para el Desarrollo Social Sostenible)\\
Universidad de C\'adiz\\
 E-11510 Puerto Real, C\'adiz, Spain\\
e-mail: ignacio.garcia@uca.es}

\medskip

\noindent {\small Alberto Vigneron-Tenorio\\
Departamento de Matem\'aticas/INDESS (Instituto Universitario para el Desarrollo Social Sostenible)\\
Universidad de C\'adiz\\
E-11406 Jerez de la Frontera, C\'adiz, Spain\\
e-mail: alberto.vigneron@uca.es}
\end{document}